\documentclass[a4paper,10pt]{amsart}

\usepackage{amsmath,amsfonts,amssymb}
\usepackage{bm}
\newcounter{num}
\newcommand{\Rnum}[1]{\setcounter{num}{#1}\Roman{num}}

\numberwithin{equation}{section}

\usepackage{amsthm}
\usepackage{tikz}
\usetikzlibrary{arrows.meta} 

\newcommand{\Hom}{\mathrm{Hom}}
\newcommand{\Der}{\mathrm{Der}}

\theoremstyle{plain}
\newtheorem{thm}{Theorem}[section]
\newtheorem{lem}[thm]{Lemma}
\newtheorem{prop}[thm]{Proposition}
\newtheorem{cor}[thm]{Corollary}

\theoremstyle{definition}

\newtheorem{dfn}[thm]{Definition}
\newtheorem{exa}[thm]{Example}

\theoremstyle{remark}
\newtheorem{rem}[thm]{Remark}

\theoremstyle{plain}
\newtheorem*{thm*}{Theorem}
\newtheorem*{lem*}{Lemma}
\newtheorem*{prop*}{Proposition}
\newtheorem*{cor*}{Corollary}
\newtheorem*{conj*}{Conjecture}
\newtheorem*{mthm*}{Main Theorem}

\theoremstyle{definition}
\newtheorem*{ass*}{Assumption}
\newtheorem*{dfn*}{Definition}

\theoremstyle{remark}
\newtheorem*{rem*}{Remark}

\begin{document}

\title[Log FW-differentials and log regularity]{A criterion for log regularity via log Frobenius-Witt differentials}
\author{Ryoma Takeuchi}
\address{Department of Mathematics, Institute of Science Tokyo, 2-12-1 Ookayama, Meguro, Tokyo 152-8551, Japan}
\email{takeuchi.r.1627@m.isct.ac.jp}
\begin{abstract}
    In \cite{MR4412577}, T.\ Saito introduced FW-derivations and the modules of FW-differentials. He gave a regularity criterion in terms of the modules of FW-differentials.

    In this paper, we introduce logarithmic analogues of FW-derivations and the modules of FW-differentials. We study basic properties of them and give a logarithmic regularity criterion in terms of the modules of logarithmic FW-differentials.
\end{abstract}
\maketitle
\tableofcontents
\section{Introduction}

\indent

In his paper \cite{MR4412577}, T.\ Saito introduced \textit{Frobenius-Witt derivations} (or \textit{FW-derivations} for short). Here, we recall the formulation of FW-derivations.
Let $p$ be a prime number and $P(X,Y)\in \mathbb{Z}[X,Y]$ be the polynomial given by
\begin{equation*}
    P(X,Y)=\frac{(X+Y)^p-X^p-Y^p}{p}=\sum_{i=1}^{p-1}\frac{(p-1)!}{i!(p-i)!}X^i Y^{p-i}.
\end{equation*}
For a ring $A$ and an $A$-module $M$, a map $D\colon A\to M$ is said to be an \textit{FW-derivation} if the following conditions are satisfied: for any $a,b\in A$, we have
\begin{align*}
    D(a+b) &= D(a)+D(b)-P(a,b)D(p), \\
    D(ab) &= b^p D(a)+a^p D(b).
\end{align*}
Saito proved that every ring $A$ has a universal FW-derivation $w\colon A\to F\Omega_A$ (\cite[Lemma 2.1]{MR4412577}) and called the $A$-module $F\Omega_A$ \textit{the module of FW-differentials} of $A$ (\cite[Definition 2.2]{MR4412577}).
Recall that an $\mathbb{F}_p$-algebra $R$ is \textit{$F$-finite} if the Frobenius homomorphism on $R$ is finite.
By using the modules of FW-differentials, he gave a regularity criterion of the following form:
\begin{thm}[{\cite[Theorem 3.4]{MR4412577}}]\label{thm:Saito_reg}
    Let $R$ be a Noetherian local ring with the residue field $k$ of characteristic $p>0$. Assume that $k$ is $F$-finite and set $[k:k^p]=p^r$. Consider the following conditions:
    \begin{itemize}
        \item[$(1)$] $F\Omega_R$ is a free $R/pR$-module of rank $\dim(R)+r$.
        \item[$(2)$] $F\Omega_R\otimes_R k$ is a $k$-vector space of dimension $\dim(R)+r$.
        \item[$(3)$] $R$ is regular. 
    \end{itemize}
    Then we always have $(1)\Rightarrow (2)\Rightarrow (3)$. Moreover, if $R/\sqrt{pR}$ is essentially of finite type over an $F$-finite field $k_1$, then the three conditions $(1)$, $(2)$, and $(3)$ are equivalent.
\end{thm}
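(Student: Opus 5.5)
The plan is to reduce everything to a comparison between $F\Omega_R\otimes_R k$ and a module built from the maximal ideal $\mathfrak m$ of $R$, modelled on the classical exact sequence $\mathfrak m/\mathfrak m^2\to\Omega_R\otimes k\to\Omega_k\to 0$.

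\textbf{The implication $(1)\Rightarrow(2)$.} This is immediate: tensoring a free $R/pR$-module of rank $n$ with $k$ gives $k^n$, because $p=0$ in $k$.

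\textbf{The implication $(2)\Rightarrow(3)$.} First I will establish a right exact sequence of $k$-vector spaces
\[
\mathfrak m/\mathfrak m^2 \otimes_{k,F} k \longrightarrow F\Omega_R\otimes_R k \longrightarrow F\Omega_k\otimes_k k\longrightarrow 0.
\]
\begin{itemize}
\item The first map is $a\mapsto 1\otimes w(a)$, and it is $k$-linear for the Frobenius twist. It is well defined on $\mathfrak m/\mathfrak m^2$ because $w(ab)=b^pw(a)+a^pw(b)$ lies in $\mathfrak m F\Omega_R$ whenever $a,b\in\mathfrak m$.
\item Additivity modulo $\mathfrak m$ holds because $P(a,b)\in\mathfrak m$.
\end{itemize}
Exactness on the right and in the middle will follow from the presentation of $F\Omega$ of a quotient $R/I$: one kills $w(I)$ and $IF\Omega_R$. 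I expect this presentation lemma to be provable directly from the universal property.

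Second, I will compute $F\Omega_k$ for a field of characteristic $p$. A $k$-valued FW-derivation satisfies $D(a+b)=D(a)+D(b)$, since $p=0$ in $k$ kills the correction term. Such a derivation is essentially a derivation of $k$ relative to $k^p$, composed with Frobenius. Using a $p$-basis, this gives $\dim F\Omega_k=r$.

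Third, I will show that the left map is injective when $\mathfrak m$ is replaced by $\mathfrak m/(\mathfrak m^2 + \text{suitable})$; in any case the dimension count gives $\dim(F\Omega_R\otimes k)\le \dim_k \mathfrak m/\mathfrak m^2 + r$. Doing this honestly requires a lower bound: one constructs FW-derivations $R\to k$ that detect independent elements of $\mathfrak m/\mathfrak m^2$. I would build them via Witt vectors of length $2$. An FW-derivation $R\to M$ corresponds to a ring homomorphism $R\to W_2$-type extension lifting Frobenius. For $a\in\mathfrak m$ one pairs with a linear functional on $\mathfrak m/\mathfrak m^2$, after passing to the completion and using a Cohen structure presentation.

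Combining the two steps gives $\dim(F\Omega_R\otimes k)=\dim_k\mathfrak m/\mathfrak m^2+r$. Hence $(2)$ forces $\dim_k\mathfrak m/\mathfrak m^2=\dim R$, which is regularity. The subtle case is $p\in\mathfrak m\setminus\mathfrak m^2$. There $w(p)$ contributes, and one must check that $w(p)\neq0$ in $F\Omega_R\otimes k$. The relation $w(p)=w(1+\dots+1)$ can be analysed using $P(1,\cdot)$: the sum of $p$ copies yields $w(p)$ times a unit.

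\textbf{Equivalence under the finiteness hypothesis, $(3)\Rightarrow(1)$.} When $R/\sqrt{pR}$ is essentially of finite type over an $F$-finite field, $F\Omega_R$ is finitely generated; I would prove this via the quotient presentation and the finite-type case. By $(3)\Rightarrow(2)$, which is the equality above, the fiber has dimension $\dim R+r$. For freeness over $R/pR$, I would show the same equality at every prime containing $p$: those localizations are regular, and their residue fields have $p$-degree $r+\dim R-\dim R_{\mathfrak q}$ by the finite-type hypothesis. Constancy of the fiber rank then gives that $F\Omega_R$ is locally free over the reduced locus. The remaining issue is passing from $R/\sqrt{pR}$ to $R/pR$ when $p$ is not prime in $R$. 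There I would use that for regular $R$ the element $p$ is part of a regular system of parameters or lies in $\mathfrak m^2$, and handle flatness over $R/pR$ via a local criterion.

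\textbf{Main obstacle.} I expect the main obstacle to be the lower bound, that is, constructing enough FW-derivations, together with the fiber-dimension computation at non-closed points needed for freeness.
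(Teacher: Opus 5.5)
Your proposal has two genuine gaps, one in each nontrivial implication.

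\textbf{The lower bound in $(2)\Rightarrow(3)$.} The right exact sequence $\mathfrak m/\mathfrak m^2\otimes_{k,F}k\to F\Omega_R\otimes_R k\to F\Omega_k\to 0$ only gives $\dim_k(F\Omega_R\otimes_R k)\le \mathrm{edim}(R)+r$, which is vacuous here. Everything rests on injectivity of the left map. That is \cite[Proposition 2.6]{MR4412577}, which the paper takes as input when it recovers this statement as the case $Q=1$ of Theorem~\ref{thm:logreg-cr}.

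Your sketch of that step never produces an actual FW-derivation. Your proposed check that $w(p)\neq 0$ cannot work: identities among universal elements are also satisfied by the zero derivation, so they can only prove vanishing. Concretely, $w(1)=0$ and $w(p)=w(1+\cdots+1)=-P_p(1,\ldots,1)w(p)$ give $w(p)=(1-p^{p-1})w(p)$, which says only $p^{p-1}w(p)=0$.

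What is needed is a construction, for example:
\begin{itemize}
\item Take a Frobenius lift $\phi$ on $C=V[[T_1,\ldots,T_e]]$, with $V$ a Cohen ring of $k$ and $\phi(T_i)=T_i^p$. Then $a\mapsto (\phi(a)-a^p)/p \bmod \mathfrak m_C$ is an FW-derivation $C\to k$ with value $1$ on $p$ and $0$ on the $T_i$.
\item The maps $a\mapsto \bigl((\partial\bar a/\partial T_i)(0)\bigr)^p$, with $\bar a$ the image of $a$ in $C/pC=k[[T_1,\ldots,T_e]]$, vanish on $p$ and are dual to the $T_i$.
\item These must then be pulled back to $R$ through $R\to\widehat R=C/J$ for a minimal Cohen presentation, and each must satisfy $D(J)=0$. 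This is automatic when $J\subset\mathfrak m_C^2$, since every FW-derivation $C\to k$ kills $\mathfrak m_C^2$.
\item If $p\in\mathfrak m_R^2$, one has $J=(p-g)C+J\cap\mathfrak m_C^2$ with $g\in\mathfrak m_C^2$, and only the derivations with $D(p)=0$ descend.
\end{itemize}
Without this, neither $(2)\Rightarrow(3)$ nor the equality you later invoke as ``$(3)\Rightarrow(2)$'' is proved.

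\textbf{Freeness over a non-reduced $R/pR$ in $(3)\Rightarrow(1)$.} Constancy of the fibre dimension on $V(pR)$ only shows that $F\Omega_R\otimes_R R/\sqrt{pR}$ is free. It gives nothing over $R/pR$ when that ring is non-reduced, which is exactly the ramified case.

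For example, $R=\mathbb Z_{(p)}[x]_{(p,x)}/(x^2-p)$ satisfies all hypotheses and $\mathrm{Spec}(R/pR)$ is a point, so your fibre condition is empty. Yet the claim is that $F\Omega_R$ is free of rank $1$ over $\mathbb F_p[x]/(x^2)$. Appealing to ``a local criterion of flatness'' would require $\mathrm{Tor}_1^{R/pR}(F\Omega_R,R/\sqrt{pR})=0$, and you have no handle on that.

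The missing ingredient is an explicit computation, which is what the paper supplies:
\begin{itemize}
\item Pass to $\widehat R$, using finite generation and faithful flatness of $R/pR\to\widehat R/p\widehat R$ (Corollary~\ref{cor:completion}).
\item Write $\widehat R\cong\widehat S/(f)$ with $S=V[T_1,\ldots,T_n]$ and $f$ of constant term $p$.
\item Note that $\tilde F\Omega_{\widehat S}$ is free on a basis containing $w(p),w(T_1),\ldots,w(T_n)$.
\item Compute $w(f)\equiv w(p)+\sum_j h_j^p w(T_j)\bmod\widehat{\mathfrak m}_S$. The coefficient of $w(p)$ is a unit, so $w(f)$ can replace $w(p)$ in the basis and the quotient by $w(f)$ is free.
\end{itemize}

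Your route could be repaired as follows. Since $R/pR$ is Cohen--Macaulay it has no embedded primes, so a minimal presentation $(R/pR)^n\to F\Omega_R$ has zero kernel once it is an isomorphism at the minimal primes of $pR$. But those localizations are DVRs in which $p$ may be ramified, so the same computation of $w(f)$ is still unavoidable there. Note also that the paper's argument needs only that $R/pR$ be $F$-finite, not Kunz's formula for the $p$-degrees of residue fields.
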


In \cite{MR4855866}, independently of the Saito's work \cite{MR4412577}, Hochster and Jeffries gave a regularity criterion via \textit{the universal perivation modules} $\tilde{\Omega}_{-\vert\mathbb{Z}}$ (which coincide with the modules $F\Omega_{-}$ of FW-differentials for $\mathbb{Z}_{(p)}$-algebras) under another assumption:
\begin{thm}[{\cite[Theorem 4.13]{MR4855866}}]\label{thm:HJ_reg}
    Let $V$ be a discrete valuation ring with uniformizer $p$ such that the residue field $V/pV$ is $F$-finite. Let $(R,\mathfrak{m},k)$ be a Noetherian local ring which is essentially of finite type over a power series ring over $V$ with finitely many variables. Set $[k:k^p]=p^r$.
    
    Then $R$ is regular if and only if the $R/pR$-module $\tilde{\Omega}_{R\vert\mathbb{Z}}$ is free of rank $\dim(R)+r$.
\end{thm}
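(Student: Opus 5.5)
We must sketch a proof of the Hochster--Jeffries criterion (Theorem \ref{thm:HJ_reg}). Since $R$ is a $V$-algebra and $V$ is a $\mathbb{Z}_{(p)}$-algebra, $\tilde{\Omega}_{R\vert\mathbb{Z}}$ coincides with $F\Omega_R$. So the claim is that $R$ is regular if and only if $F\Omega_R$ is a free $R/pR$-module of rank $\dim(R)+r$.

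\emph{Freeness implies regularity.} Assume $F\Omega_R$ is free of rank $\dim(R)+r$. This is condition $(1)$ of Theorem \ref{thm:Saito_reg}, and that theorem gives $(1)\Rightarrow(3)$ with no finiteness hypothesis beyond $F$-finiteness of $k$. That $k$ is $F$-finite follows because $k$ is a residue field of a localization of a finite type algebra over $V/pV[[x_1,\dots,x_n]]$, which is $F$-finite since $V/pV$ is. So this direction is immediate.

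\emph{Regularity implies freeness.} Assume $R$ is regular. I would reduce to a computation for the base ring $A=V[[x_1,\dots,x_n]]$ and then transport it along the structure map.

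\begin{itemize}
\item[(a)] First compute $F\Omega_A$ directly. I expect it to be free over $A/pA$ on $w(p)$, $w(x_i)$, and $w(\tilde b_j)$, where $\tilde b_j$ are lifts of a $p$-basis of $V/pV$. The tools are the exact sequence comparing $F\Omega_A\otimes_A A/pA$ with $\Omega_{A/pA}$ (Frobenius-twisted), together with the fact that $A/pA=(V/pV)[[x]]$ has the $p$-basis $\{b_j,x_i\}$ after completion.
\item[(b)] Next, show that localization and étale-type base change are compatible with $F\Omega$. I would then use a first fundamental sequence for $A\to B$ with $B$ of finite type, namely $F\Omega_A\otimes B\to F\Omega_B\to \Omega_{B/A}$ with Frobenius twists, together with a conormal sequence $I/I^2\to F\Omega_P\otimes B\to F\Omega_B\to 0$ for a presentation $B=P/I$.
\item[(c)] Write $R$ as a localization of $P/I$ with $P$ a polynomial ring over $A$. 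Regularity of $R$ together with the Jacobian criterion then gives that $I/I^2$ maps injectively onto a direct summand after reducing mod $p$. Here the FW-differential of $f\in I$ is $w(f)$, which behaves like a Frobenius-twisted $df$ plus a $p$-derivation term.
\end{itemize}

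Counting ranks then gives $\dim(R)+r$. Here $\dim R = \dim P_{\mathfrak q} - \operatorname{ht} I$, and the residue field degree is accounted for by the $p$-basis bookkeeping. Nakayama's lemma upgrades the fiberwise dimension count to freeness over $R/pR$.

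\emph{Main obstacle.} The hard step is (c): the Jacobian criterion in mixed characteristic for FW-differentials. The needed statement is that a regular local $R$ makes the images $w(f_1),\dots,w(f_h)$ of generators of $I$ linearly independent in $F\Omega_P\otimes k$. I would first reduce to the case where $R$ is complete, or pass to a regular system of parameters. Then I would split into two cases:
\begin{itemize}
\item[(i)] $p\notin\mathfrak m^2$: choose the parameters so that $p$ is one of them, reducing to the equicharacteristic-$p$ ring $R/pR$ and the classical $p$-basis argument.
\item[(ii)] $p\in\mathfrak m^2$: here the relation $w(p)$ itself contributes, and the term $P(a,b)w(p)$ has to be handled carefully.
\end{itemize}
This case analysis is where I expect most of the work.
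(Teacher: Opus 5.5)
\textbf{There is a gap in the converse direction.} Your implication ``free $\Rightarrow$ regular'' is fine: it is the unconditional $(1)\Rightarrow(3)$ of Theorem~\ref{thm:Saito_reg}, which is also what $(1)\Rightarrow(2)\Rightarrow(3)$ of Theorem~\ref{thm:logreg-cr} with $Q=1$ amounts to.

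The converse breaks exactly where you say the work is. Nothing in the proposal shows that $w(f_1),\dots,w(f_h)$ are linearly independent in $F\Omega_{P_{\mathfrak q}}\otimes k$. The split into $p\notin\mathfrak m^2$ and $p\in\mathfrak m^2$ is only a plan, and case (ii) is not addressed at all.

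The missing ingredient is \cite[Proposition 2.6]{MR4412577}, which the paper uses for $(3)\Rightarrow(2)$. It says that for a local ring $(B,\mathfrak n,k)$ with $k$ $F$-finite of characteristic $p$, one has $\dim_k F\Omega_B\otimes_B k=\mathrm{edim}(B)+r$. Proposition~\ref{prop:logFW-basic}~(2) with $I=\mathfrak n$ gives a right exact sequence $F^*(\mathfrak n/\mathfrak n^2)\to F\Omega_B\otimes_B k\to F\Omega_k\to 0$, whose first map is induced by $w$, and $\dim_k F\Omega_k=r$. So Saito's count says precisely that $w$ is injective on $\mathfrak n/\mathfrak n^2$.

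Apply this with $B=P_{\mathfrak q}$:
\begin{itemize}
\item Since $P_{\mathfrak q}$ and $R$ are both regular, $I_{\mathfrak q}$ is generated by $h=\dim P_{\mathfrak q}-\dim(R)$ elements that are linearly independent in $\mathfrak q/\mathfrak q^2$. Their images under $w$ are therefore independent in $F\Omega_{P_{\mathfrak q}}\otimes k$, whether or not $p\in\mathfrak m^2$, and without completing.
\item The conormal map $F^*(I_{\mathfrak q}/I_{\mathfrak q}^2)\otimes_R R/pR\to \tilde F\Omega_{P_{\mathfrak q}}\otimes R/pR$ is well defined, because $P(f,g)\in I$ and $w(af)\equiv a^pw(f)$ modulo $I$ for $f,g\in I$.
\item Granting freeness of $\tilde F\Omega_{P_{\mathfrak q}}$, this is a map of finite free $R/pR$-modules that is injective modulo $\mathfrak m$. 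Hence it is split injective.
\item Its cokernel $F\Omega_R$ is then free of rank $\mathrm{edim}(P_{\mathfrak q})+r-h=\dim(R)+r$. This rank again comes from Saito's count, not from $p$-basis bookkeeping.
\end{itemize}
Freeness comes from this splitting, not from ``Nakayama upgrading the fibre count''. A fibre count alone cannot force freeness over $R/pR$, which can be non-reduced: for $R=\mathbb{Z}_{(p)}[\sqrt p]$ one has $R/pR\cong\mathbb{F}_p[x]/(x^2)$.

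\textbf{Step (a) has a smaller hole of the same kind.} For $p$-torsion-free $A$, the sequence $A/pA\xrightarrow{\cdot w(p)}\tilde F\Omega_A\to F^*\Omega_{A/pA}\to 0$ is only right exact in general. For $\mathbb{Z}_{(p)}[\sqrt p]$ with $\pi=\sqrt p$ one gets $w(p)=2\pi^pw(\pi)=0$ modulo $p$. So for $A=V[[x_1,\dots,x_n]]$ you must still show that $w(p)$ has zero annihilator. Since $A/pA$ is a domain, it suffices that $w(p)\neq 0$ after localizing at $pA$. That follows from Saito's count for the discrete valuation ring $A_{pA}$ together with the localization proposition of Section~3.

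\textbf{Comparison with the paper.} With these inputs your route works, and it is genuinely different from the paper's. The paper obtains Theorem~\ref{thm:HJ_reg} as the case $Q=1$ of Theorem~\ref{thm:logreg-cr}, using that $R/pR$ is $F$-finite. It proves freeness as follows:
\begin{itemize}
\item descend along the faithfully flat map $R/pR\to\widehat R/p\widehat R$ (Corollary~\ref{cor:completion}~(2));
\item write $\widehat R\cong\widehat S/(f)$ with $S=V[T_1,\dots,T_n]$ and $f$ of constant term $p$, via Theorem~\ref{thm:logreg-str};
\item check by hand that $w(f)\equiv w(p)+\sum_j h_j^p w(T_j)$ modulo $\widehat{\mathfrak m}_S$, so $w(f)$ extends to a basis.
\end{itemize}
The paper thus only handles a single relation whose unimodularity is visible, at the price of completion and the structure theorem, and it needs only $F$-finiteness of $R/pR$. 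Your presentation argument avoids completion, but it needs the general injectivity statement above and a finite-type presentation of $R$.
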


\vspace{5pt}

In this paper, we introduce logarithmic analogues of FW-derivations and the modules of FW-differentials for prelogarithmic rings.
For a prelogarithmic ring $(R,Q,\alpha)$, we denote the module of logarithmic FW-differentials by $F\Omega_{(R,Q,\alpha)}$.
Moreover, we give a logarithmic regularity criterion of the following form:
\begin{mthm*}[{Theorem \ref{thm:logreg-cr}}]
    Let $(R,Q,\alpha)$ be a local prelog ring such that $R$ is a Noetherian local ring with the residue field $k$ of characteristic $p>0$, that $Q$ is integral, and that $\overline{Q}$ is fine and saturated. Assume that $k$ is $F$-finite and set $[k:k^p]=p^r$. Consider the following conditions:
    \begin{itemize}
        \item[\textup{(1)}] $F\Omega_{(R,Q,\alpha)}\otimes_R R/pR$ is a free $R/pR$-module of rank $\dim(R)+r$.
        \item[\textup{(2)}] $F\Omega_{(R,Q,\alpha)}\otimes_R k$ is a $k$-vector space of dimension $\dim(R)+r$.
        \item[\textup{(3)}] $(R,Q,\alpha)$ is log regular.
    \end{itemize}
    Then we always have $(1)\Rightarrow (2)\Leftrightarrow (3)$. Moreover, if $R/pR$ is $F$-finite, then the three conditions $(1)$, $(2)$, and $(3)$ are equivalent.
\end{mthm*}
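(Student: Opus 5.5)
Throughout, $\mathfrak m$ denotes the maximal ideal of $R$, $I=\alpha(Q\setminus Q^\times)R$, and $d=\operatorname{rank}\overline{Q}^{\mathrm{gp}}$. Recall that $(R,Q,\alpha)$ is log regular if and only if $R/I$ is regular and $\dim R=\dim R/I+d$.

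The key is an exact sequence of $k$-vector spaces. It should relate $F\Omega_{(R,Q,\alpha)}\otimes_R k$ to Saito's module $F\Omega_{R/I}\otimes k$ and to the log part $\overline{Q}^{\mathrm{gp}}\otimes\mathbb F_p$. I would prove a second fundamental exact sequence for logarithmic FW-differentials:
\begin{equation*}
I/(I^2+\mathfrak m I)\ \longrightarrow\ F\Omega_{(R,Q,\alpha)}\otimes_R k\ \longrightarrow\ F\Omega_{(R/I,\,Q,\,\alpha)}\otimes k\ \longrightarrow 0 .
\end{equation*}
On $R/I$ the log structure becomes trivial away from units, so the right-hand term splits as $\bigl(F\Omega_{R/I}\otimes k\bigr)\oplus\bigl(\overline{Q}^{\mathrm{gp}}\otimes k\bigr)$. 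The summand $\overline{Q}^{\mathrm{gp}}\otimes k$ has dimension $d$, because $\overline{Q}$ is fine, saturated and hence torsion free.

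I also expect the first map to kill the images of the $\alpha(q)$: the log FW-differential of $\alpha(q)$ equals $\alpha(q)^p\,\mathrm{dlog}\,q$, which vanishes mod $\mathfrak m$. So the sequence reduces to
\begin{equation*}
\dim_k F\Omega_{(R,Q,\alpha)}\otimes k=\dim_k\bigl(F\Omega_{R/I}\otimes k\bigr)+d .
\end{equation*}
The details require the explicit presentation of $F\Omega_{(R,Q,\alpha)}$ from the definitions, namely universal FW-derivations together with the map $\mathrm{dlog}\colon Q\to F\Omega$ satisfying $w(\alpha(q))=\alpha(q)^p\,\mathrm{dlog}\,q$.

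For (2)$\Leftrightarrow$(3), the dimension identity gives $\dim F\Omega_{(R,Q,\alpha)}\otimes k=\dim(R)+r$ if and only if $\dim(F\Omega_{R/I}\otimes k)=\dim R-d+r$. Two general inequalities are needed here.
\begin{itemize}
\item From Saito's proof of Theorem \ref{thm:Saito_reg}, $\dim(F\Omega_{A}\otimes k)\ge \operatorname{emb.dim}A+r\ge\dim A+r$ for any Noetherian local $A$, with equality on the left computed via $\mathfrak m/\mathfrak m^2$. Saito shows the sequence $\mathfrak m/\mathfrak m^2\to F\Omega_A\otimes k\to F\Omega_k\to0$ is left exact in the relevant sense.
\item For local log rings, $\dim R/I\ge\dim R-d$, since $I$ is generated up to radical by the images of $d$ generators of $\overline Q$. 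This is the standard fact that $\dim R\le\dim R/I+d$.
\end{itemize}
Combining these, equality in (2) forces $\operatorname{emb.dim}R/I=\dim R/I=\dim R-d$, which is log regularity. Conversely, log regularity gives $R/I$ regular, so by Theorem \ref{thm:Saito_reg} in the refined form ``regular $\Rightarrow$ $\dim F\Omega\otimes k=\dim+r$'' we get equality. That refined form needs only the $k$-level statement, which holds unconditionally by Saito's computation. Hence (2)$\Leftrightarrow$(3).

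The implication (1)$\Rightarrow$(2) is immediate by base change along $R/pR\to k$. For (3)$\Rightarrow$(1) under the hypothesis that $R/pR$ is $F$-finite, I would show that $M=F\Omega_{(R,Q,\alpha)}\otimes R/pR$ is finitely generated. This comes from the presentation: $F\Omega$ is generated over $R$ by $w$ of $p$-basis-like elements and by $\mathrm{dlog}$ of generators of $\overline Q$. Then I would prove freeness by a local flatness or fibre-dimension argument. Choose a system of $\dim(R)+r$ elements of $M$ lifting a $k$-basis, which gives a surjection $(R/pR)^{\dim R+r}\to M$. To show injectivity, pass to the completion and use the structure theorem for complete log regular rings. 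This identifies $\hat R$ with a quotient $C[[Q]][[T_1,\dots,T_n]]/(\theta)$ over a Cohen ring $C$, where $\theta\equiv p$ modulo the augmentation ideal. For that explicit ring the module of log FW-differentials can be computed to be free on the images of $\mathrm{dlog}$ of a basis of $\overline Q^{\mathrm{gp}}$, the $w(T_j)$, and $w$ of a $p$-basis of $k$.

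The main obstacle will be this last step. The comparison of $F\Omega$ with completion requires $F$-finiteness of $R/pR$, and the treatment of the relation $\theta$ requires care in the ramified case $\theta\ne p$. I would first attempt it via Saito's reduction to FW-differentials of $R/pR$-algebras, namely $F\Omega_R\otimes R/pR$ and $\Omega$ of the Frobenius twist.
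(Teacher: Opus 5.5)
Your argument for $(1)\Rightarrow(2)\Leftrightarrow(3)$ is essentially the paper's. Your exact sequence is Proposition~\ref{prop:logFW-basic}~(2) with $I=I_\alpha$. The splitting is Lemma~\ref{lem:factor}: the paper first passes to $\overline{Q}$ via Lemma~\ref{lem:red-to-sharp}, while you absorb $Q^\times$ through $w(\alpha(u))/\alpha(u)^p$, which works equally well because $Q^{gp}\to\overline{Q}^{gp}$ splits. The two inequalities are Saito's Proposition~2.6 and Proposition~\ref{prop:log-reg1}~(1).

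One side remark: the reason you give for $\dim R\le\dim R/I+d$ is wrong as stated. If $\overline{Q}$ has more than $d$ extremal rays, no $d$ monomials $\alpha(q_i)$ generate $I$ up to radical; this already fails for $R=k[[Q]]$ with $Q$ the cone over a square. The inequality is still true, but only as the cited result.

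The genuine gap is $(3)\Rightarrow(1)$. You assert that, for the explicit complete ring, $\tilde{F}\Omega$ is free on $\mathrm{dlog}$ of a basis of $\overline{Q}^{gp}$, the $w(T_j)$, and $w$ of a $p$-basis of $k$. That is precisely the content of this implication, and you leave it open. The route you suggest through $F^*\Omega_{R/pR}$ only sees $\tilde{F}\Omega_R$ modulo the submodule generated by $w(p)$ (Saito's Corollary~2.4). In the ramified case $R/pR$ is not regular, so you would still have to control both that quotient and the span of $w(p)$, and the proposal does not say how.

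The missing idea is to compute on an ambient ring where everything is explicit, and then impose one relation. The steps are:
\begin{itemize}
\item Reduce to sharp $Q$, which you need anyway for Theorem~\ref{thm:logreg-str}. Then write $R\cong\widehat{S}/f\widehat{S}$ with $S=V[T_1,\dots,T_n][Q]$ and $f$ of constant term $p$.
\item By Proposition~\ref{prop:mon-alg} and Saito's Propositions~2.5~(3) and~2.6, $\tilde{F}\Omega_{(S,Q,\iota)}$ is free over $S/pS$, with $w(p),w(T_1),\dots,w(T_n)$ part of a basis.
\item Since $S/pS$ is $F$-finite, Corollary~\ref{cor:completion}~(2) carries this over to $\widehat{S}$.
\item Proposition~\ref{prop:logFW-basic}~(2) gives $\tilde{F}\Omega_{(R,Q,\alpha)}\cong\bigl(\tilde{F}\Omega_{(\widehat{S},Q,\widehat{\iota})}\otimes\widehat{S}/(p,f)\bigr)/\langle w(f)\rangle$.
\item Write $f-p=\sum_i g_i\widehat{\iota}(x_i)+\sum_j h_jT_j$. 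Then $w(f)\equiv w(p)+\sum_j h_j^p w(T_j)$ modulo $\widehat{\mathfrak m}_S\tilde{F}\Omega$. This holds because the $P$-correction terms lie in $p\widehat{S}$ or $\widehat{\mathfrak m}_S$, and the $w\log(x_i)$ terms carry a factor $\widehat{\iota}(x_i)^p$.
\item Hence $w(f)$ has a unit coefficient on the basis vector $w(p)$, and killing it leaves a free module of the correct rank.
\end{itemize}
This treats $\theta\ne p$ with no extra care. The equal-characteristic case is the simpler computation with $S=k[T_1,\dots,T_n][Q]$, where $F\Omega=F^*\Omega$ is visibly free. $F$-finiteness of $R/pR$ enters only through finite generation and this completion comparison.
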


Here, we give some comments on the proof of Main Theorem. First, we reduce to the case where $Q$ is sharp.
Next, we prove that the conditions (2) and (3) are equivalent by using \cite[Proposition 2.6]{MR4412577} mainly.
Finally, we prove $(3)\Rightarrow (1)$ by passing to completions.

\vspace{5pt}

We expect that, as in the non-logarithmic case, the modules of logarithmic FW-differentials have a relation with logarithmic cotangent complexes, which were established by Olsson and Gabber in \cite{MR2195148}.

\vspace{5pt}

The content of this paper is as follows. In Section 2, we recall notions and basic properties of monoids and (pre)logarithmic rings including logarithmic regularity.

In Section 3, we give the definition of \textit{logarithmic FW-derivations} for prelogarithmic rings (Definition \ref{def:logFW-der}) and prove that every prelogarithmic ring has a universal logarithmic FW-derivation (Theorem \ref{thm:logFW-exist}). Moreover, we study basic properties of the modules of logarithmic FW-differentials.

In Section 4, we prove Main Theorem (Theorem \ref{thm:logreg-cr}) after some lemmas.

\vspace{5pt}
\noindent
\textbf{Acknowledgement}.
The author would like to thank Yuri Yatagawa for valuable comments and discussions.
\section{Monoids and prelogarithmic rings}

\indent

In this section, we recall some basic notions of monoids and (pre)logarithmic rings.
In this paper, we assume that every ring (resp.\ monoid) is commutative with the unity and that every homomorphism of rings (resp.\ monoids) preserves the unity. For a monoid $Q$, we write the monoid operation on $Q$ multiplicatively and write $1$ for the unity of $Q$ if there is no confusion.
When we refer to a ring as a monoid, we always use the multiplicative structure.
\subsection{Monoids}

\indent

For a monoid $Q$, let $Q^\times$ and $Q^+$ denote the set of invertible elements of $Q$ and that of non-invertible elements of $Q$, respectively.
Let $\mathbf{Mon}$ denote the category of monoids.
For a monoid $Q$, let $Q^{gp}$ be the group of elements of the form $xy^{-1}$, where $x,y\in Q$.
Note that, for any abelian group $A$, there is a natural bijection
    \begin{equation*}
        \Hom_\mathbb{Z}(Q^{gp},A)\cong \Hom_\mathbf{Mon}(Q,A),
    \end{equation*}
where $\Hom_\mathbf{Mon}(Q,A)$ denotes the set of homomorphisms of monoids from $Q$ to $A$.
\begin{dfn}[{\cite[Chapter \Rnum{1}, Section 1.3]{MR3838359}}]
    Let $Q$ be a monoid.
    \begin{itemize}
        \item[(1)] $Q$ is called \textit{integral} if the canonical homomorphism $Q\to Q^{gp}$ is an injection.
        \item[(2)] $Q$ is called \textit{fine} if $Q$ is finitely generated and integral.
        \item[(3)] $Q$ is called \textit{saturated} if $Q$ is integral and $Q$ has the following property: if $x\in Q^{gp}$ and $x^n\in Q$ for some $n>0$, then $x\in Q$.
        \item[(4)] $Q$ is called \textit{sharp} if $Q^\times=1$.
    \end{itemize}
\end{dfn}

\begin{rem}\label{rem:mon}
    We give some remarks.
    \begin{itemize}
        \item[(1)] (\cite[Chapter \Rnum{1}, Proposition 1.3.3]{MR3838359}) Let $Q$ be an integral monoid. Then $\overline{Q}$ is also integral.
        \item[(2)] (\cite[Chapter \Rnum{1}, Proposition 1.3.5 (4)]{MR3838359}) Let $Q$ be an integral monoid. Then $Q$ is saturated if and only if $\overline{Q}$ is saturated.
        \item[(3)] Let $Q$ be a monoid. Then $Q$ is sharp if and only if the natural surjection $Q\to \overline{Q}$ is an isomorphism. Since $\overline{Q}$ is sharp, we have $\overline{\overline{Q}}\cong \overline{Q}$.
    \end{itemize}
\end{rem}

\begin{dfn}[{\cite[Chapter \Rnum{1}, Section 1.4]{MR3838359}}]
    Let $Q$ be a monoid.
    \begin{itemize}
        \item[(1)] A subset $I$ of $Q$ is called an \textit{ideal} if $ax\in I$ for any $a\in Q$ and any $x\in I$.
        \item[(2)] A \textit{prime ideal} of $Q$ is an ideal $\mathfrak{p}$ of $Q$ such that $Q\setminus \mathfrak{p}$ is a submonoid of $Q$.
        \item[(3)] The \textit{dimension} of $Q$ is the maximum length $d$ of a chain of prime ideals
        \begin{equation*}
            \mathfrak{p}_0\subset \cdots \subset \mathfrak{p}_d.
        \end{equation*}
        We denote it by $\mathrm{dim}(Q)$.
    \end{itemize}
\end{dfn}
Let $Q$ be a monoid. Note that $\emptyset\subset Q$ is the minimum prime ideal of $Q$ and $Q^+$ is the maximum prime ideal of $Q$.
We donote the set of prime ideals of a monoid $Q$ by $\mathrm{Spec}(Q)$.
\begin{lem}\label{lem:dim}
    Let $Q$ be a monoid. Then we have the equality $\dim(Q)=\dim(\overline{Q})$.    
\end{lem}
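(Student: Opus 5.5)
The plan is to exhibit an order-preserving bijection between $\mathrm{Spec}(Q)$ and $\mathrm{Spec}(\overline{Q})$, where $\overline{Q}=Q/Q^\times$ and $\pi\colon Q\to\overline{Q}$ is the natural surjection. Since $\dim$ is defined as the supremum of lengths of chains of prime ideals, such a bijection immediately gives $\dim(Q)=\dim(\overline{Q})$.

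The key observation is that every prime ideal $\mathfrak{p}$ of $Q$ is disjoint from $Q^\times$. Indeed, if $u\in\mathfrak{p}$ were invertible, then $1=u^{-1}u\in\mathfrak{p}$ by the ideal property, and hence $\mathfrak{p}=Q$. But $Q\setminus\mathfrak{p}$ is a submonoid, so it contains $1$, which is a contradiction. It follows that every prime ideal is stable under multiplication by units, i.e.\ $\mathfrak{p}=\pi^{-1}(\pi(\mathfrak{p}))$, because $x\in\mathfrak{p}$ and $u\in Q^\times$ give $ux\in\mathfrak{p}$.

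Next we define the maps $\mathfrak{p}\mapsto\pi(\mathfrak{p})$ and $\mathfrak{q}\mapsto\pi^{-1}(\mathfrak{q})$ and check that they land in prime ideals.
\begin{itemize}
\item For $\mathfrak{q}\in\mathrm{Spec}(\overline{Q})$, the preimage $\pi^{-1}(\mathfrak{q})$ is an ideal, and its complement $\pi^{-1}(\overline{Q}\setminus\mathfrak{q})$ is a submonoid because $\pi$ is a monoid homomorphism.
\item For $\mathfrak{p}\in\mathrm{Spec}(Q)$, the image $\pi(\mathfrak{p})$ is an ideal since $\pi$ is surjective. By the saturation $\mathfrak{p}=\pi^{-1}(\pi(\mathfrak{p}))$, its complement is $\pi(Q\setminus\mathfrak{p})$, which is a submonoid.
\end{itemize}
Both maps preserve inclusion. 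They are mutually inverse by the saturation property together with the surjectivity of $\pi$. Finally, this correspondence matches the empty ideal with the empty ideal and strict inclusions with strict inclusions, so chains of primes in $Q$ and in $\overline{Q}$ correspond bijectively with the same lengths.

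The only step needing any care is showing that primes avoid units; everything else is formal. The same argument also covers the case where $\dim$ is infinite, since the correspondence of chains is length-preserving in both directions.
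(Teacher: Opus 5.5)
Your proof is correct and follows the paper's argument: both set up the order-preserving bijection $\mathfrak{p}\mapsto\pi(\mathfrak{p})$, $\mathfrak{q}\mapsto\pi^{-1}(\mathfrak{q})$ between $\mathrm{Spec}(Q)$ and $\mathrm{Spec}(\overline{Q})$ using the saturation $\pi^{-1}(\pi(I))=I$, and you check primality in more detail than the paper does. One small remark: you do not need the fact that primes avoid $Q^\times$. Saturation holds for every ideal $I$ directly from the ideal property ($x\in I$ and $u\in Q^\times$ give $ux\in I$), which is how the paper phrases it, so the real work is done by that observation rather than by your ``It follows that'' step.
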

\begin{proof}
    Let $\pi$ denote the canonical surjection $Q\to \overline{Q}$. If $I$ is an ideal of $Q$, an element $x$ of $Q$ belongs to $I$ if and only if $\pi(x)$ belongs to the ideal $\pi(I)$. Therefore, for a prime ideal $\mathfrak{p}$ of $Q$, the ideal $\pi(\mathfrak{p})$ is a prime ideal of $\overline{Q}$.
    Moreover, we have $\pi^{-1}(\pi(\mathfrak{p}))=\mathfrak{p}$ for every $\mathfrak{p}\in \mathrm{Spec}(Q)$ and have $\pi(\pi^{-1}(\mathfrak{p}'))=\mathfrak{p}'$ for every $\mathfrak{p}'\in \mathrm{Spec}(\overline{Q})$. Thus we have an order-preserving one-to-one correspondence between $\mathrm{Spec}(Q)$ and $\mathrm{Spec}(\overline{Q})$, which gives the equality $\dim(Q)=\dim(\overline{Q})$.
\end{proof}

\begin{prop}\label{prop:mon1}
    Let $Q$ be a monoid.
    \begin{itemize}
        \item[\textup{(1)}] \textup{(\cite[Chapter \Rnum{1}, Proposition 1.3.5 (2)]{MR3838359})} If $Q$ is saturated, then $\overline{Q}^{gp}$ is a torsion-free abelian group.
        \item[\textup{(2)}] \textup{(\cite[Chapter \Rnum{1}, Proposition 1.4.7 (2)]{MR3838359})} If $Q$ is integral, then we have the inequality $\dim(Q)\leq \mathrm{rank}(\overline{Q}^{gp})$. Moreover, if $Q$ is fine, then the equality $\mathrm{dim}(Q)=\mathrm{rank}(\overline{Q}^{gp})$ holds.
    \end{itemize}
\end{prop}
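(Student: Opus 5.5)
Both parts are cited from Ogus's book, but I would still give a self-contained argument using only the definitions and Lemma~\ref{lem:dim}. The first step, used in both parts, is to pass to the sharpening $\overline{Q}=Q/Q^\times$. For part (1) this is harmless because Remark~\ref{rem:mon}(2) gives that $\overline{Q}$ is saturated whenever $Q$ is. For part (2), Lemma~\ref{lem:dim} gives $\dim(Q)=\dim(\overline{Q})$, and Remark~\ref{rem:mon}(1) keeps integrality; fineness also passes to $\overline{Q}$, since it is a quotient of a finitely generated monoid. Moreover, $\overline{\overline{Q}}^{gp}=\overline{Q}^{gp}$ by Remark~\ref{rem:mon}(3). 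So in both parts we may assume $Q$ is sharp.

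\emph{Part (1).} Let $Q$ be sharp and saturated, and let $x\in Q^{gp}$ with $x^n=1$ for some $n>0$. Then $x^n=1\in Q$, so saturation gives $x\in Q$. The same argument applied to $x^{-1}$ gives $x^{-1}\in Q$. Hence $x\in Q^\times=\{1\}$, so $\overline{Q}^{gp}$ is torsion-free.

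\emph{Part (2), the inequality.} Let $Q$ be sharp and integral, and take a chain of primes $\emptyset=\mathfrak{p}_0\subsetneq\cdots\subsetneq\mathfrak{p}_d$. Each complement $F_i=Q\setminus\mathfrak{p}_i$ is a face: a submonoid such that $xy\in F_i$ implies $x,y\in F_i$. These faces form a strictly decreasing chain $F_0\supsetneq\cdots\supsetneq F_d$. I claim the subgroups $F_i^{gp}\subset Q^{gp}$ also strictly decrease, with each index infinite. Pick $x\in F_{i}\setminus F_{i+1}$ and suppose $x^m\in F_{i+1}^{gp}$ for some $m>0$. Then $x^m=ab^{-1}$ with $a,b\in F_{i+1}$, so $x^mb=a$ in $Q$, using integrality. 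Since $a\in F_{i+1}$ and $F_{i+1}$ is a face, this forces $x\in F_{i+1}$, a contradiction. So $F_{i+1}^{gp}\otimes\mathbb{Q}\subsetneq F_i^{gp}\otimes\mathbb{Q}$. Note that $F_d=Q^\times$, since $\mathfrak{p}_d\subseteq Q^+$ is proper, and $Q^\times=\{1\}$ in the sharp case. The rational ranks therefore strictly decrease from $\mathrm{rank}(Q^{gp})$ down to $\mathrm{rank}(\{1\})=0$ along $d$ steps, which gives $d\le\mathrm{rank}(Q^{gp})$.

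\emph{Part (2), the equality.} This is the main obstacle, since we must construct a chain of primes of full length. For $Q$ fine and sharp, $Q$ embeds in $Q^{gp}$, and the cone $C=\mathbb{R}_{\ge0}Q\subset Q^{gp}\otimes\mathbb{R}$ is a finitely generated polyhedral cone of full dimension $n=\mathrm{rank}(Q^{gp})$. I would verify that $C$ is strongly convex; in the integral sharp case this needs some care, and possibly a passage to $Q^{sat}$ or to the quotient by torsion. Polyhedral theory then gives a full flag of faces $\{0\}=\tau_0\subsetneq\tau_1\subsetneq\cdots\subsetneq\tau_n=C$ with $\dim\tau_j=j$. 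Since $Q$ spans $C$, each $\tau_j\cap Q$ is a face of $Q$ whose rational rank is $j$. Taking complements, $\mathfrak{p}_j=Q\setminus(\tau_{n-j}\cap Q)$ are primes forming a chain of length $n$, so $\dim(Q)\ge n$. Combined with the inequality, this gives equality. Alternatively, one can argue by induction: choose a facet, which is a face of corank one obtained by a supporting linear functional on $Q$ that is nonnegative and vanishes on a subset of generators spanning a hyperplane, and then apply the induction hypothesis to the face, which is again fine.
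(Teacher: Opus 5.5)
Your argument is correct. Note that the paper gives no proof here: both parts are quoted from Ogus. What you wrote is essentially the standard argument behind those citations. You pass to $\overline{Q}$ via Lemma~\ref{lem:dim} and Remark~\ref{rem:mon}. For the inequality you use the face--prime correspondence and the fact that no power of $x\notin F$ lies in $F^{gp}$ when $F$ is a face of an integral monoid. For the equality you use the face lattice of the polyhedral cone $\mathbb{R}_{\ge0}Q$. Your route makes the statement self-contained, at the cost of one step that you explicitly leave open.

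\textbf{Strong convexity.} That step, strong convexity of $C$ for fine sharp $Q$, is true and needs no passage to $Q^{sat}$.
\begin{itemize}
\item[(a)] Let $q_1,\dots,q_m$ generate $Q$, with images $\bar q_i\in Q^{gp}\otimes\mathbb{R}$.
\item[(b)] If $v,-v\in C$, adding the two expressions gives $\sum_i c_i\bar q_i=0$ with all $c_i\ge0$. It suffices to show $\bar q_j=0$ for every $j\in J=\{j : c_j>0\}$, because then $v=0$.
\item[(c)] The subspace $\{c'\in\mathbb{R}^m : \sum_i c'_i\bar q_i=0,\ c'_i=0 \text{ for } i\notin J\}$ is cut out by equations with rational coefficients, since $Q^{gp}$ is finitely generated. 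So its rational points are dense.
\item[(d)] This subspace contains $c$, which lies in the open set where $c'_j>0$ for all $j\in J$. Hence it contains a rational vector with the same property, and after scaling, an integral vector $(n_i)$ with $n_j>0$ for all $j\in J$.
\item[(e)] Then $\prod_{j\in J}q_j^{n_j}$ maps to $0$ in $Q^{gp}\otimes\mathbb{R}$, so it is torsion in $Q^{gp}$. By integrality some power of it equals $1$ in $Q$.
\item[(f)] This makes each $q_j$ with $j\in J$ a unit, hence $q_j=1$ by sharpness.
\end{itemize}
With this, $\tau_0=\{0\}$ pulls back to the units of $Q$, i.e. to $\{1\}$, and your chain of primes runs from $\emptyset$ to $Q^+$ with length $\mathrm{rank}(Q^{gp})$. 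The other inputs you cite are standard and correct: the existence of a full flag of faces in a strongly convex polyhedral cone, and the fact that a face of $C$ is spanned by the generators it contains.

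\textbf{A minor slip.} In the inequality you write $F_d=Q^\times$ because $\mathfrak{p}_d\subseteq Q^+$. That inclusion only gives $F_d\supseteq Q^\times$. This is harmless: you only need $\mathrm{rank}(F_d^{gp})\ge0$, or you can take the chain to end at $Q^+$.
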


\begin{prop}\label{prop:mon2}
    Let $Q$ be an integral monoid such that $\overline{Q}$ is fine and saturated. Then the canonical surjection $\pi\colon Q\to \overline{Q}$ admits a section.
\end{prop}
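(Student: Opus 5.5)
The idea is to build the section at the level of groups and then check that it lands in $Q$. Here $\overline{Q}=Q/Q^\times$. Since $Q$ is integral, we have $Q\subset Q^{gp}$ and $Q^\times\subset Q^{gp}$. The map $\pi$ extends to a surjective group homomorphism $\pi^{gp}\colon Q^{gp}\to \overline{Q}^{gp}$. The first step is to identify its kernel with $Q^\times$, giving an exact sequence
\begin{equation*}
    1\to Q^\times\to Q^{gp}\xrightarrow{\pi^{gp}} \overline{Q}^{gp}\to 1.
\end{equation*}
To see this, write an element of $Q^{gp}$ as $xy^{-1}$ with $x,y\in Q$. If $\pi(x)=\pi(y)$ in $\overline{Q}$, then $x=yu$ for some $u\in Q^\times$, because $\overline{Q}$ is the quotient of $Q$ by the action of $Q^\times$. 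Hence $xy^{-1}=u\in Q^\times$.

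Next, by Remark \ref{rem:mon} and Proposition \ref{prop:mon1}(1), $\overline{Q}$ is saturated, so $\overline{Q}^{gp}$ is torsion-free. Since $\overline{Q}$ is finitely generated, $\overline{Q}^{gp}$ is also finitely generated, hence free of finite rank. The exact sequence therefore splits. Concretely, choose a basis $e_1,\dots,e_n$ of $\overline{Q}^{gp}$, lifts $f_i\in Q^{gp}$, and define $s^{gp}\colon \overline{Q}^{gp}\to Q^{gp}$ by $e_i\mapsto f_i$; then $\pi^{gp}\circ s^{gp}=\mathrm{id}$.

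Let $s$ be the restriction of $s^{gp}$ to $\overline{Q}\subset\overline{Q}^{gp}$; this inclusion holds because $\overline{Q}$ is integral. The remaining, and main, point is that $s(\overline{Q})\subset Q$. Take $\bar{q}\in\overline{Q}$ and pick $q\in Q$ with $\pi(q)=\bar{q}$. Then $\pi^{gp}(s(\bar{q}))=\bar{q}=\pi^{gp}(q)$, so $s(\bar{q})=qu$ with $u\in\ker\pi^{gp}=Q^\times$. Hence $s(\bar{q})\in Q$. Thus $s\colon\overline{Q}\to Q$ is a monoid homomorphism with $\pi\circ s=\mathrm{id}_{\overline{Q}}$.

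The only delicate step is the kernel computation, which is where integrality of $Q$ is genuinely used. The saturatedness hypothesis enters only through the torsion-freeness needed for splitting.
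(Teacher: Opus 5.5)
Your proof is correct and follows essentially the same route as the paper: split $Q^{gp}\to\overline{Q}^{gp}$ using that $\overline{Q}^{gp}$ is free of finite rank, then show that the restriction to $\overline{Q}$ lands in $Q$ because two lifts of the same element of $\overline{Q}$ differ by a unit. The paper does your kernel computation inline, writing $s(\xi)=yz^{-1}$ and deducing $y=uxz$ with $u\in Q^\times$; like yours, it tacitly uses that $\overline{Q}$ is integral to pass from an equality in $\overline{Q}^{gp}$ to an equality in $\overline{Q}$.
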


\begin{proof}
    Since $\overline{Q}$ is fine and saturated, the abelian group $\overline{Q}^{gp}$ is finitely generated and torsion-free by Proposition \ref{prop:mon1}. Thus $\overline{Q}^{gp}$ is free of finite rank and the canonical surjection $\pi\colon Q^{gp}\to \overline{Q}^{gp}$ admits a section $s\colon \overline{Q}^{gp}\to Q^{gp}$.
    Since $Q$ (resp.\ $\overline{Q}$) is integral, we may identify $Q$ (resp.\ $\overline{Q}$) as a submonoid of $Q^{gp}$ (resp.\ $\overline{Q}^{gp}$). Then it suffices to show that $s(\overline{Q})$ is contained in $Q$. Let $\xi$ be an element of $\overline{Q}$ and take $x\in Q$ such that $\pi(x)=\xi$. We write $s(\xi)=yz^{-1}$, where $y,z\in Q$. Then we have $\pi(y)=\pi(xz)$ in $\overline{Q}$ since $\pi s(\xi)=\pi(x)$.
    Thus there exists $u\in Q^\times$ such that $y=uxz$. Then we have $s(\xi)=yz^{-1}=ux\in Q$.
\end{proof}

\subsection{Prelogarithmic rings}

\begin{dfn}[{\cite[Chapter \Rnum{3}, Definition 1.1.1]{MR3838359}}]
    Let $R$ be a ring.
    \begin{itemize}
        \item[(1)] A \textit{prelogarithmic} structure (or a \textit{prelog} structure) on $R$ is a pair $(Q,\alpha)$ where $Q$ is a monoid and $\alpha\colon Q\to R$ is a homomorphism of monoids.
        \item[(2)] A \textit{logarithmic} structure (or a \textit{log} structure) on $R$ is a prelog structure $(Q,\alpha)$ on $R$ such that $\alpha^{-1}(R^\times)\cong R^\times$ via $\alpha$.
    \end{itemize}
\end{dfn}

A \textit{prelog ring} (resp.\ \textit{log ring}) is a ring $R$ endowed with a prelog structure $(Q,\alpha)$ on $R$ (resp.\ a log structure $(Q,\alpha)$ on $R$) which we denote by $(R,Q,\alpha)$.
A (pre)log ring $(R,Q,\alpha)$ is \textit{local} if $R$ is a local ring and that we have $\alpha^{-1}(R^\times)=Q^\times$.

Let $R$ be a ring. A homomorphism of (pre)log structures on $R$ from $(Q_1,\alpha_1)$ to $(Q_2,\alpha_2)$ is a homomorphism of monoids $g\colon Q_1\to Q_2$ satisfying $\alpha_2 g=\alpha_1$.
Let $\mathbf{pLog}_R$ (resp.\ $\mathbf{Log}_R$) denote the category of prelog structures on $R$ (resp.\ the category of log structures on $R$).
Let $(Q,\alpha)$ be a prelog structure on $R$. Define $Q^{log}$ to be the pushout of the following diagram of monoids:
\begin{equation*}
        \begin{tikzpicture}[auto,->]
            \node (a) at (0,2) {$\alpha^{-1}(R^\times)$};
            \node (b) at (0,0) {$R^\times$};
            \node (x) at (2,2) {$Q$};
            \draw[{Hooks[right]}->] (a) -- (x);
            \draw (a) -- node {$\scriptstyle \alpha$}(b);
        \end{tikzpicture}
\end{equation*}
Let $\iota_1$ and $\iota_2$ denote the natural homomorphism $Q\to Q^{log}$ and $R^\times \to Q^{log}$, respectively. Then the homomorphism $\alpha\colon Q\to R$ induces a homomorphism
\begin{equation*}
    \alpha^{log}\colon Q^{log}\to R.
\end{equation*}
We can check that $(Q^{log},\alpha^{log})$ defines a log structure on $R$ and that $(Q^{log},\alpha^{log})$ satisfies the following property: for every log structure $(Q',\alpha')$ on $R$ and every homomorphism $g\colon (Q,\alpha)\to (Q',\alpha')$, there is a unique homomorphism
\begin{equation*}
    g^{log}\colon (Q^{log},\alpha^{log})\to (Q',\alpha')
\end{equation*}
satisfying $g=g^{log}\iota_1$.
Therefore the functor $(Q,\alpha)\mapsto (Q^{log},\alpha^{log})$ is a left adjoint of the inclusion functor $\mathbf{Log}_R\to \mathbf{pLog}_R$. For a log ring $(R,Q,\alpha)$, set
\begin{equation*}
    (R,Q,\alpha)^{log}:=(R,Q^{log},\alpha^{log}).
\end{equation*}
We call the log ring $(R,Q,\alpha)^{log}$ \textit{the associated log ring} of the prelog ring $(R,Q,\alpha)$.
The categories $\mathbf{pLog}_R$ and $\mathbf{Log}_R$ has the initial objects $(1,1\to R)$ and $(R^\times,R^\times\hookrightarrow R)$, respectively. When we refer to a usual ring $R$ as a prelog ring (resp.\ a log ring), we assume that $R$ is equipped with the prelog structure $(1,1\to R)$ (resp.\ the log structure $(R^\times,R^\times\hookrightarrow R)$) unless otherwise stated.

Let $(R,Q,\alpha)$ and $(R',Q',\alpha')$ be (pre)log rings. A homomorphism of (pre)log rings from $(R,Q,\alpha)$ to $(R',Q',\alpha')$ is a pair of maps $(f,g)$, where $f\colon R\to R'$ is a homomorphism of rings and $g\colon Q\to Q'$ is a homomorphism of monoids satisfying $f\alpha=\alpha'g$.
Note that, for a prelog ring $(R,Q,\alpha)$, we have a natural homomorphism of prelog rings $(R,Q,\alpha)\to (R,Q,\alpha)^{log}$.

For a prelog ring $(R,Q,\alpha)$, let $I_\alpha$ denote the ideal of $R$ generated by $\alpha(Q^+)$.

\begin{exa}
    Let $R$ be a ring and $Q$ be a monoid. Then we have a prelog ring $(R[Q],Q,\iota)$, where $\iota$ denotes the natural homomorphism $Q\to R[Q]$. We also write $R[Q^+]$ for the ideal $I_\iota=\iota(Q^+)R[Q]$. If $Q$ is sharp, then we have an isomorphism
    \begin{equation*}
        R[Q]/R[Q^+]\cong R.
    \end{equation*}
\end{exa}

\begin{dfn}[{\cite[(2.1) Definition]{MR1296725}, \cite[Chapter \Rnum{3}, Section 1.11]{MR3838359}}]
    Let $(R,Q,\alpha)$ be a local prelog ring such that $R$ is Noetherian and that $\overline{Q}$ is fine and saturated. We say that $(R,Q,\alpha)$ is \textit{log regular} if the following conditions are satisfied:
    \begin{itemize}
        \item[(1)] $R/I_\alpha$ is a regular local ring.
        \item[(2)] The equality $\dim(R)=\dim(R/I_\alpha)+\dim(Q)$ holds.
    \end{itemize}
\end{dfn}

\begin{rem}
    Let $(R,Q,\alpha)$ be a local prelog ring. Then $(R,Q,\alpha)^{log}$ is also a local prelog ring and we have $\overline{Q}\cong \overline{Q^{log}}$. We verify this assertion.
    Let $\mathbf{Mon}^{\text{sh}}$ denote the full subcategory of $\mathbf{Mon}$ whose objects are sharp monoids. Then the functor $Q\mapsto \overline{Q}$ is a left adjoint of the inclusion functor $\mathbf{Mon}^{\text{sh}}\to \mathbf{Mon}$ and hence preserves colimits.
    Therefore we have the pushout diagram of sharp monoids:
    \begin{equation*}
        \begin{tikzpicture}[auto,->]
            \node (a) at (0,2) {$\overline{\alpha^{-1}(R^\times)}$};
            \node (b) at (0,0) {$\overline{R^\times}$};
            \node (x) at (2,2) {$\overline{Q}$};
            \node (y) at (2,0) {$\overline{Q^{log}}$};
            \draw (a) -- (x);
            \draw (a) -- (b);
            \draw (b) -- (y);
            \draw (x) -- (y);
        \end{tikzpicture}
    \end{equation*}
    Since $\alpha^{-1}(R^\times)=Q^\times$ and $R^\times$ are abelian groups, we have $\overline{\alpha^{-1}(R^\times)}=\overline{R^\times}=1$. Thus we have $\overline{Q}\cong \overline{Q^{log}}$.

    In addition, assume that $R$ is Noetherian and that $\overline{Q}$ is fine and saturated. Then $\overline{Q^{log}}$ is also fine and saturated.
    Moreover, the local prelog ring $(R,Q,\alpha)$ is log regular if and only if the associated local log ring $(R,Q,\alpha)^{log}$ is log regular. Indeed, we have $I_\alpha=I_{\alpha^{log}}$ by the construction of $\alpha^{log}$ and have
    \begin{equation*}
        \dim(Q)=\dim(\overline{Q})=\dim(\overline{Q^{log}})=\dim(Q^{log})
    \end{equation*}
    by using $\overline{Q}\cong \overline{Q^{log}}$ and Lemma \ref{lem:dim}.
\end{rem}

\begin{prop}\label{prop:log-reg1}
    Let $(R,Q,\alpha)$ be a local prelog ring such that $R$ is Noetherian.
    \begin{itemize}
        \item[\textup{(1)}] \textup{(\cite[Chapter \Rnum{3}, Proposition 1.10.17]{MR3838359})} Assume that $Q$ is fine and saturated and that $Q^{gp}$ is torsion-free. Then we have
            \begin{equation*}
                \dim(R)\leq \dim(R/I_\alpha)+\dim(Q).
            \end{equation*}
        \item[\textup{(2)}] Assume that $Q$ is integral and that $\overline{Q}$ is fine and saturated. Let $s\colon \overline{Q}\to Q$ be a section of the canonical surjection $Q\to \overline{Q}$, which exists by \textup{Proposition \ref{prop:mon2}}. Then $(R,\overline{Q},\alpha s)$ is also a local prelog ring. Moreover, $(R,Q,\alpha)$ is log regular if and only if $(R,\overline{Q},\alpha s)$ is log regular.
    \end{itemize}
\end{prop}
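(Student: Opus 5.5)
Part (1) is cited, so only part (2) needs proof. The plan is to check locality directly and then compare the two conditions in the definition of log regularity one at a time: we will show that $I_{\alpha s}=I_\alpha$ and $\dim(\overline{Q})=\dim(Q)$, after which the equivalence is immediate.

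\emph{Locality.} Let $\pi\colon Q\to\overline{Q}$ denote the canonical surjection. Since $\overline{Q}$ is sharp, $\overline{Q}^\times=1$, so we must show $(\alpha s)^{-1}(R^\times)=\{1\}$. Let $\xi\in\overline{Q}$ satisfy $\alpha(s(\xi))\in R^\times$. Because $(R,Q,\alpha)$ is local, $s(\xi)\in\alpha^{-1}(R^\times)=Q^\times$, and hence $\xi=\pi(s(\xi))=1$. The hypotheses on $R$ and $\overline{Q}$ are unchanged, and $\overline{\overline{Q}}\cong\overline{Q}$ is fine and saturated by Remark \ref{rem:mon}(3). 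So $(R,\overline{Q},\alpha s)$ is a local prelog ring to which the definition of log regularity applies.

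\emph{The ideals agree.} We show $I_{\alpha s}=I_\alpha$.
\begin{itemize}
\item[(a)] If $\xi\in\overline{Q}^+$, then $s(\xi)\notin Q^\times$, since otherwise $\xi=\pi(s(\xi))$ would be invertible. Thus $s(\overline{Q}^+)\subset Q^+$, which gives $I_{\alpha s}\subset I_\alpha$.
\item[(b)] For $x\in Q^+$, set $\xi=\pi(x)\in\overline{Q}^+$. Then $\pi(s(\xi))=\pi(x)$, so $x=u\,s(\xi)$ for some $u\in Q^\times$. This is exactly the argument in the proof of Proposition \ref{prop:mon2}, using integrality of $Q$. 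Hence $\alpha(x)=\alpha(u)\,\alpha s(\xi)$ with $\alpha(u)\in R^\times$, so $\alpha(x)\in I_{\alpha s}$. This gives $I_\alpha\subset I_{\alpha s}$.
\end{itemize}
In particular $R/I_\alpha=R/I_{\alpha s}$.

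\emph{The dimensions agree.} By Lemma \ref{lem:dim}, $\dim(Q)=\dim(\overline{Q})$. The two conditions defining log regularity are therefore literally identical for $(R,Q,\alpha)$ and $(R,\overline{Q},\alpha s)$, which proves the equivalence.

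The only step with any content is the inclusion $I_\alpha\subset I_{\alpha s}$ in (b), which relies on the fibres of $\pi$ being $Q^\times$-cosets. Even that is routine, so no real obstacle is expected.
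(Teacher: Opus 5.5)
Your proof is correct and follows the paper's argument. Locality comes from $(\alpha s)^{-1}(R^\times)=s^{-1}(Q^\times)=\overline{Q}^\times$, the equality $I_\alpha=I_{\alpha s}$ from the factorization $x=u_x\cdot s\pi(x)$ with $u_x\in Q^\times$, and $\dim(Q)=\dim(\overline{Q})$ from Lemma \ref{lem:dim}; you simply write out both inclusions of the ideals, which the paper leaves implicit.
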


\begin{proof}
    We prove (2).
    Since $(R,Q,\alpha)$ is a local prelog ring, we have
    \begin{equation*}
        (\alpha s)^{-1}(R^\times)=s^{-1}(Q^\times)=\overline{Q}^\times.
    \end{equation*}
    This implies that $(R,\overline{Q},\alpha s)$ is a local prelog ring.
    For every $x\in Q$, there is a unique element $u_x$ of $Q^\times$ such that $x=u_x\cdot s\pi(x)$.
    By this factorization, we have $I_\alpha=I_{\alpha s}$. We have $\dim(Q)=\dim(\overline{Q})$ by Lemma \ref{lem:dim}. Thus $(R,Q,\alpha)$ is log regular if and only if $(R,\overline{Q},\alpha s)$ is log regular.
\end{proof}

Let $R$ be a ring, $Q$ be a fine sharp monoid, and $\iota\colon Q\to R[Q]$ be the natural homomorphism. We denote the $I_\iota$-adic completion of $R[Q]$ by $R[[Q]]$.

\begin{prop}
    Let $R$ be a ring, $Q$ be a fine sharp monoid, and $\iota\colon Q\to R[Q]$ be the natural homomorphism. We denote the composition $Q\stackrel{\iota}{\to} R[Q]\to R[[Q]]$ by $\widehat{\iota}$.
    \begin{itemize}
        \item[\textup{(1)}] \textup{(\cite[Chapter \Rnum{1}, Proposition 3.6.1 (4)]{MR3838359})} If $Q^{gp}$ is torsion free and $R$ is a domain, then $R[[Q]]$ is also a domain.
        \item[\textup{(2)}] \textup{(\cite[Chapter \Rnum{1}, Proposition 3.6.1 (5)]{MR3838359})} If $R$ is a local ring with maximal ideal $\mathfrak{m}$, then $(R[[Q]],Q,\widehat{\iota})$ is a local prelog ring and the maximal ideal of $R[[Q]]$ is generated by $\mathfrak{m}R[[Q]]$ and $I_\iota R[[Q]]$.
    \end{itemize}
\end{prop}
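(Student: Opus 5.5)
Both parts of the statement are cited from Ogus's book, but I would still give a direct argument, since each reduces to standard facts about monoid algebras and completions.

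For (2), the plan is to identify $R[[Q]]$ with a ring of formal series. Because $Q$ is fine and sharp, every $q \in Q$ has a well-defined maximal length $\ell(q)$ of a factorization into non-units, and this length is finite. Using this, $I_\iota^n$ is spanned by the monomials $q$ with $\ell(q)\ge n$, and so $R[[Q]]$ is the ring of formal sums $\sum_{q\in Q} a_q q$.
\begin{itemize}
\item[(a)] \emph{Units.} Let $f = a_1\cdot 1 + g$ with $g \in I_\iota R[[Q]]$, where $a_1$ is the coefficient of the unit monomial. Then $f$ is a unit if and only if $a_1 \in R^\times$. One direction is the geometric series $\sum (-g/a_1)^n$, which converges $I_\iota$-adically. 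For the other direction, use the surjection $R[[Q]] \to R[[Q]]/I_\iota R[[Q]] \cong R[Q]/I_\iota \cong R$, which exists because $Q$ is sharp.
\item[(b)] \emph{Locality.} It follows from (a) that the non-units form the ideal $\mathfrak{m}R[[Q]] + I_\iota R[[Q]]$. This ideal is the preimage of $\mathfrak{m}$ under the surjection above, so $R[[Q]]$ is local with the stated maximal ideal.
\item[(c)] \emph{Local prelog structure.} We need $\widehat{\iota}^{-1}(R[[Q]]^\times) = Q^\times = \{1\}$. For $q \ne 1$, the element $\widehat{\iota}(q)$ lies in $I_\iota R[[Q]]$, hence in the maximal ideal, so it is not a unit.
\end{itemize}
The one point to check carefully in this part is that the $I_\iota$-adic topology agrees with the topology given by the length filtration. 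For this I would use that $I_\iota$ is generated by the finitely many irreducible elements of $Q$, so $I_\iota^n$ is generated by products of $n$ of them.

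For (1), the plan is to reduce to ordinary power series via an order on $Q^{gp}$.
\begin{itemize}
\item[(a)] Since $Q^{gp}$ is finitely generated and torsion-free, it is free. Choose a total group order on it, for example a lexicographic order after an isomorphism $Q^{gp} \cong \mathbb{Z}^n$.
\item[(b)] Given nonzero $f,g \in R[[Q]]$, look at the minimal monomials in their supports with respect to this order.
\item[(c)] A minimal monomial need not exist for an arbitrary total order. So I would instead choose a local homomorphism $h\colon Q\to\mathbb{N}$, with $h(q)>0$ for $q\ne 1$; this exists because $Q$ is fine and sharp. Then grade $R[[Q]]$ by $h$-degree: each graded piece involves finitely many monomials, and the ring sits inside $\prod_d R[Q]_d$.
\item[(d)] Take the lowest-degree nonzero homogeneous components $f_d$ and $g_e$ of $f$ and $g$. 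Then $f_d g_e$ is the degree-$(d+e)$ component of $fg$.
\item[(e)] Conclude using that $R[Q] \subset R[Q^{gp}]$, a Laurent polynomial ring over a domain, which is itself a domain. Hence $f_d g_e \ne 0$.
\end{itemize}
This step needs $Q$ to be integral, which holds because $Q$ is fine.

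I expect the main obstacle to be constructing the grading $h$ and showing that the $h$-adic and $I_\iota$-adic completions coincide. For this I would use the fact that, for a fine sharp monoid, $Q^{gp}\otimes\mathbb{Q}$ contains $Q$ inside a strictly convex cone. A linear functional that is positive on the nonzero elements of that cone can be scaled to take integer values on the generators, and that gives $h$.
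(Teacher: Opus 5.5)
Your proposal is correct. There is no argument in the paper to compare it with: both parts are quoted from Ogus (Chapter I, Proposition 3.6.1) without proof, so what you have is a self-contained substitute.

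The backbone is sound. Since $Q^+$ and its powers are ideals of $Q$, you get $I_\iota^n=\bigoplus_{q\in(Q^+)^n}R\,q$, and hence $R[[Q]]\cong\prod_{q\in Q}R\,q$. Part (2) then follows from the surjection $R[[Q]]\to R[Q]/I_\iota\cong R$ together with the geometric series. Part (1) follows from taking lowest-degree components for a local homomorphism $h\colon Q\to\mathbb{N}$, using the injection $R[Q]\hookrightarrow R[Q^{gp}]\cong R[t_1^{\pm1},\dots,t_r^{\pm1}]$.

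What your route buys over the citation is transparency about where each hypothesis enters:
\begin{itemize}
\item sharpness gives $R[Q]/I_\iota\cong R$ and the existence of $h$;
\item integrality gives cancellation and the injectivity of $R[Q]\to R[Q^{gp}]$;
\item torsion-freeness of $Q^{gp}$ is used only to make $R[Q^{gp}]$ a domain.
\end{itemize}

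A few points deserve tightening.
\begin{itemize}
\item In (2), $Q^{gp}$ may have torsion. Your cone argument, which places $Q$ inside $Q^{gp}\otimes\mathbb{Q}$, therefore does not directly give finiteness of $\ell(q)$. You can repair this in either of two ways. One is to note that the image of $Q$ in $Q^{gp}/\mathrm{tors}$ is still sharp, and only $1$ maps to the identity, by integrality. The other is Dickson's lemma: if $q=x^{a}=x^{a'}$ with $a\le a'$ and $a\neq a'$ in $\mathbb{N}^m$, cancellation forces $x^{a'-a}=1$, which contradicts sharpness.
\item You should justify that the kernel of $R[[Q]]\to R$ is exactly the ideal $I_\iota R[[Q]]$, not merely its closure. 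For instance, choose for each $q\in Q^+$ one of the finitely many generators $x_i$ dividing it. Then a formal sum over $Q^+$ can be written as $\sum_i x_i g_i$, because $q\mapsto q/x_i$ is injective.
\item The positive functional on the cone should be taken rational, i.e.\ in the interior of the dual cone, before you scale it to integer values.
\end{itemize}

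Finally, what you call the main obstacle is easy. Once $R[[Q]]=\prod_q R\,q$ is established, grading by $h$ is just regrouping the product. The two filtrations are cofinal, since $(Q^+)^n\subseteq\{h\ge n\}$ and $\{h\ge Nn\}\subseteq(Q^+)^n$ with $N=\max_i h(x_i)$.
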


\begin{thm}[{\cite[Chapter \Rnum{3}, Theorem 1.11.2]{MR3838359}}]\label{thm:logreg-str}
    Let $(R,Q,\alpha)$ be a local prelog ring such that $R$ is Noetherian and $Q$ is fine, saturated, and sharp. Let $k=R/\mathfrak{m}$ denote the residue field of $R$.
    \begin{itemize}
        \item[\textup{(1)}] Suppose that $R$ is of equal characteristic. Then $(R,Q,\alpha)$ is log regular if and only if there exists a commutative diagram of the form
            \begin{equation*}
                \begin{tikzpicture}[auto]
                    \node (a) at (0,2) {$Q$};
                    \node (b) at (0,0) {$R$};
                    \node (x) at (2,2) {$k[[Q\oplus \mathbb{N}^r]]$};
                    \node (y) at (2,0) {$\widehat{R},$};
                    \draw[{Hooks[right]}->] (a) -- (x);
                    \draw[->] (b) -- (y);
                    \draw[->] (a) -- node {$\scriptstyle \alpha$}(b);
                    \draw[->] (x) -- node[swap] {$\scriptstyle \cong$}(y);
                    \draw (2.2,1) node {$\scriptstyle \varphi$};
                \end{tikzpicture}
            \end{equation*}
            where the top homomorphism is the natural injection and $\widehat{R}$ denotes the $\mathfrak{m}$-adic completion of $R$.
        \item[\textup{(2)}] Suppose that $R$ is of mixed characteristic. Let $p>0$ be the characteristic of $k$ and $V$ be a Cohen ring of $k$. Then $(R,Q,\alpha)$ is log regular if and only if there exists a commutative diagram of the form
            \begin{equation*}
                \begin{tikzpicture}[auto]
                    \node (a) at (0,2) {$Q$};
                    \node (b) at (0,0) {$R$};
                    \node (x) at (2,2) {$V[[Q\oplus \mathbb{N}^r]]$};
                    \node (y) at (2,0) {$\widehat{R},$};
                    \draw[{Hooks[right]}->] (a) -- (x);
                    \draw[->] (b) -- (y);
                    \draw[->] (a) -- node {$\scriptstyle \alpha$}(b);
                    \draw[->>] (x) -- node {$\scriptstyle \varphi$}(y);
                \end{tikzpicture}
            \end{equation*}
            where $\varphi$ is surjective and its kernel is generated by an element $f$ of $V[[Q\oplus \mathbb{N}^r]]$ whose constant term is $p$.
    \end{itemize}
\end{thm}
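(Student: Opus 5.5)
The plan is to reduce to the complete case and then build $\varphi$ from the Cohen structure theorem together with $\alpha$, checking first surjectivity and then injectivity (equal characteristic) or the shape of the kernel (mixed characteristic) by counting dimensions.

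\textbf{Reduction to completions.} Since $R\to\widehat{R}$ is faithfully flat, we have $\widehat{R}/I_\alpha\widehat{R}\cong\widehat{R/I_\alpha}$. Hence $R/I_\alpha$ is regular if and only if $\widehat{R}/I_\alpha\widehat{R}$ is, and both $\dim(R)$ and $\dim(R/I_\alpha)$ are unchanged by completion. So $(R,Q,\alpha)$ is log regular if and only if $(\widehat{R},Q,\widehat{\alpha})$ is, and we may work with $\widehat{R}$ throughout. I will also use two standard facts about $k[[Q]]$ (resp.\ $V[[Q]]$) for $Q$ fine, saturated and sharp. First, it is a domain, by the Proposition on $R[[Q]]$ recalled above, since $Q^{gp}$ is torsion free. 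Second, it has dimension $\dim(Q)$ (resp.\ $\dim(Q)+1$), computed via the rank of $Q^{gp}$ and Proposition \ref{prop:mon1} (2).

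\textbf{The ``if'' directions.} In equal characteristic, suppose the isomorphism $\varphi$ exists. Then
\begin{equation*}
    \widehat{R}/I_\alpha\widehat{R}\cong k[[Q\oplus\mathbb{N}^r]]/(Q^+)\cong k[[\mathbb{N}^r]],
\end{equation*}
which is regular of dimension $r$. Moreover $\dim\widehat{R}=\dim(Q)+r$, so both conditions of log regularity hold. In mixed characteristic the argument is the same: $\widehat{R}/I_\alpha\widehat{R}\cong V[[\mathbb{N}^r]]/(\bar f)$, where $\bar f$ has constant term $p$ and so lies outside $\mathfrak{m}^2$ or is at least a nonzero element. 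Hence this quotient is regular of dimension $r$, and $\dim\widehat{R}=\dim(Q)+r+1-1$.

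\textbf{The ``only if'' directions.} Assume $(R,Q,\alpha)$ is log regular and put $n=\dim(R/I_\alpha)$. Take a coefficient field $k\to\widehat{R}$ (resp.\ a Cohen ring map $V\to\widehat{R}$). Choose lifts $t_1,\dots,t_r\in\widehat{R}$ of a regular system of parameters of $\widehat{R}/I_\alpha\widehat{R}$; here $r=n$ in equal characteristic, and in mixed characteristic take $r$ so that these elements together with $p$ generate the maximal ideal of the quotient. Define $\varphi$ by $Q\ni q\mapsto\alpha(q)$ and $e_i\mapsto t_i$. This is continuous because $\alpha(Q^+)$ and the $t_i$ lie in the maximal ideal. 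To see that $\varphi$ is surjective, note that the induced map
\begin{equation*}
    k[[\mathbb{N}^r]]\ (\text{resp. } V[[\mathbb{N}^r]])\longrightarrow \widehat{R}/I_\alpha\widehat{R}
\end{equation*}
is surjective by the Cohen structure theorem, and that $I_\alpha$ is generated by the image of $Q^+$. The complete Nakayama lemma, applied with respect to the ideal generated by $Q^+$, then gives surjectivity of $\varphi$.

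In equal characteristic, the source is a complete local domain of dimension $\dim(Q)+r=\dim(R)$, the last equality by log regularity. A nonzero kernel would strictly lower the dimension, so $\varphi$ is an isomorphism. In mixed characteristic, $p$ lies in the maximal ideal of $\widehat{R}$, so by surjectivity onto the maximal ideal we can write $p=\varphi(g)$ with $g$ in the ideal generated by $Q^+$ and the $e_i$. Put $f=p-g$; then $f\in\ker\varphi$ and its constant term is $p$. The ring $A=V[[Q\oplus\mathbb{N}^r]]$ is a domain of dimension $\dim(R)+1$, so $A/(f)$ has dimension $\dim(R)$, and $\varphi$ induces a surjection $A/(f)\to\widehat{R}$ between rings of the same dimension.

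\textbf{Main obstacle.} The hard step is proving $\ker\varphi=(f)$, that is, that $A/(f)$ has no other minimal primes. My first attempt will use Hochster's theorem that $k[[Q]]$ and $V[[Q]]$ are Cohen--Macaulay for $Q$ saturated. Then $A/(f)$ is Cohen--Macaulay, hence unmixed, so it suffices to show that it is a domain. For this I would reduce modulo the regular sequence given by the images of $Q^+$ and the $e_i$, where the quotient $V[[\mathbb{N}^r]]/(\bar f)$-type ring is regular. I would then lift the domain property by a graded, associated-graded argument along the $Q^+$-adic filtration, in the style of the proof in \cite{MR3838359}. If that fails, the fallback is to compare $A/(f)$ with $\widehat{R}$ generically, after inverting $p$.
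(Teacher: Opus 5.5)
The paper does not prove this statement; it quotes it from Ogus's book (it is Kato's structure theorem), so your argument has to stand on its own. Several parts are fine: the reduction to $\widehat R$, both ``if'' directions, the construction of $\varphi$ with surjectivity by complete Nakayama, and equal-characteristic injectivity via the dimension count in the domain $k[[Q\oplus\mathbb N^r]]$.

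\textbf{The gap.} It is the step you flag yourself: $\ker\varphi=(f)$ in mixed characteristic. This is the real content of (2), and the route you sketch does not work.
\begin{itemize}
\item The generators of $Q^+$ do not form a regular sequence unless $Q\cong\mathbb N^{\dim Q}$. Killing them lowers $\dim A$ by only $\dim Q$, while a saturated $Q$ usually needs more than $\dim Q$ generators.
\item More seriously, the $Q^+$-adic associated graded ring is not a domain in general. Take $Q=\{(a,b)\in\mathbb N^2 : a\equiv b \bmod 3\}$, generated by $x=(3,0)$, $y=(0,3)$, $z=(1,1)$, so that $xy=z^3$ multiplicatively. The classes $\bar x,\bar y$ are nonzero in degree $1$, but $xy\in(Q^+)^3$, so $\bar x\bar y=0$. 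Since $\bar f$ is a nonzerodivisor on the free $V[[\mathbb N^r]]$-modules $\mathrm{gr}^n_{Q^+}A$, this zero divisor survives in $\mathrm{gr}_{Q^+}(A/(f))$.
\item Hochster's theorem does not rescue this. Cohen--Macaulayness only removes embedded components; you would still have to show that $A/(f)$ has a unique, generically reduced minimal prime, which is the actual difficulty.
\end{itemize}

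\textbf{The missing idea.} Filter by a grading rather than by powers of $Q^+$.
\begin{itemize}
\item Since $Q$ is fine and sharp, there is a local homomorphism $h\colon Q\to\mathbb N$, i.e.\ $h^{-1}(0)=\{1\}$. Let $F^nA$ be the ideal of series in $A=V[[Q\oplus\mathbb N^r]]$ supported on $\{h(q)\geq n\}$.
\item Then $\mathrm{gr}_FA\cong V[[\mathbb N^r]][Q]$, which is a domain. The initial form of $f$ is its image $\bar f\in V[[\mathbb N^r]]$ in degree $0$, nonzero because its constant term is $p$.
\item Hence $\mathrm{in}((f))=\bar f\cdot \mathrm{gr}_FA$, and $\mathrm{gr}_F(A/(f))\cong (V[[\mathbb N^r]]/(\bar f))[Q]$. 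This is a domain because $V[[\mathbb N^r]]/(\bar f)$ is regular (as $\bar f\notin\mathfrak m^2$) and $Q^{gp}$ is torsion free.
\item The filtration $F^\bullet$ is cofinal with the $(Q^+)$-adic one, since $J^n\subseteq F^n\subseteq J^{\lceil n/c\rceil}$. So by Krull's intersection theorem the induced filtration on $A/(f)$ is separated, and $A/(f)$ is a domain.
\item Finally, $\dim A/(f)=\dim A-1=\dim\widehat R$, so the prime $\ker\varphi/(f)$ must be $0$. Cohen--Macaulayness is not needed.
\end{itemize}

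\textbf{A smaller correction about $r$.} The dimension count above, and your claim $p\in\varphi\bigl((Q^+,e_1,\dots,e_r)A\bigr)$, both require $r=n$. The elements $t_1,\dots,t_n$ must lift a \emph{full} regular system of parameters of $\widehat R/I_\alpha\widehat R$; if $p$ is itself a parameter, let some $t_i$ be $p$. If you take only enough $t_i$ to generate together with $p$, then when $p\notin\mathfrak m_{R/I_\alpha}^2$ you get $r=n-1$ and $\dim A=\dim R$. In that case $\varphi$ is injective, and its kernel cannot have the required form.
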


\begin{rem}
    Let $(R,Q,\alpha)$ be a local prelog ring such that $Q$ is fine, saturated, and sharp. If $(R,Q,\alpha)$ is log regular, then the numbers $r$ in (1) and (2) of Theorem \ref{thm:logreg-str} are equal to the dimension of $R/I_\alpha$. We prove this claim.

    Suppose that we have a diagram as in (1). Then we have $\widehat{R}/I_\alpha \widehat{R}\cong k[[\mathbb{N}^r]]$. Thus we have the equality
    \begin{equation*}
        \dim(R/I_\alpha)=\dim(\widehat{R}/I_\alpha \widehat{R})=\dim(k[[\mathbb{N}^r]])=r.
    \end{equation*}

    Suppose that we have a diagram as in (2). Then we have
    \begin{equation*}
        \widehat{R}/I_\alpha \widehat{R}\cong V[[\mathbb{N}^r]]/(\tilde{f}),
    \end{equation*}
    where $\tilde{f}$ is the image of $f$ in $V[[\mathbb{N}^r]]$. Since the constant term of $\tilde{f}$ is also $p$, the element $\tilde{f}$ is a non-zero-divisor of the regular local ring $V[[\mathbb{N}^r]]$.
    Thus we have the equality
    \begin{equation*}
        \dim(R/I_\alpha)=\dim(\widehat{R}/I_\alpha \widehat{R})=\dim(V[[\mathbb{N}^r]]/(\tilde{f}))=1+r-1=r.
    \end{equation*}
\end{rem}

\section{Logarithmic Frobenius-Witt differentials}

\indent

In this section, we introduce logarithmic FW-derivations and logarithmic FW-differentials.
Let $p$ be a prime number and $P(X,Y)\in \mathbb{Z}[X,Y]$ be the polynomial given by
\begin{equation*}
    P(X,Y)=\sum_{i=1}^{p-1} \frac{(p-1)!}{i!(p-i)!} X^i Y^{p-i}.
\end{equation*}
\begin{dfn}[{\cite[Definition 1.1]{MR4412577}}]
    Let $R$ be a ring and $M$ be an $R$-module. A map $D\colon R\to M$ is said to be a \textit{Frobenius-Witt derivation} or an \textit{FW-derivation} if the following condition is satisfied:
    for any $a,b\in R$, we have
    \begin{align*}
        D(a+b) &= D(a)+D(b)-P(a,b)D(p), \\
        D(ab) &= b^p D(a)+a^p D(b).
    \end{align*}
\end{dfn}

\begin{dfn}\label{def:logFW-der}
    Let $(R,Q,\alpha)$ be a prelog ring and $M$ be an $R$-module. A \textit{logarithmic FW-derivation} (or a \textit{log FW-derivation} for short) of $(R,Q,\alpha)$ with values in $M$ is a pair of maps $(D,\delta)$, where $D\colon R\to M$ is an FW-derivation and $\delta\colon Q\to M$ is a homomorphism of monoids satisfying
    \begin{equation*}
        D(\alpha(x))=\alpha(x)^p \delta(x)
    \end{equation*}
    for every $x\in Q$.
\end{dfn}

Let $F\mathrm{Der}_{(R,Q,\alpha)}(M)$ denote the set of log FW-derivations of $(R,Q,\alpha)$ with values in $M$. Then $F\Der_{(R,Q,\alpha)}(M)$ is a subset of the set
\begin{equation*}
    F\Der_R(M)\times \Hom_\mathbf{Mon}(Q,M).
\end{equation*}
Note that the set $\Hom_\mathbf{Mon}(Q,M)$ has a natural $R$-module structure and that there are natural isomorphisms of $R$-modules
\begin{equation*}
    \Hom_\mathbf{Mon}(Q,M)\cong \Hom_\mathbb{Z}(Q^{gp},M)\cong \Hom_R(R\otimes_\mathbb{Z} Q^{gp},M).
\end{equation*}

\begin{prop}\label{prop:logFW-fun}
    Let $(R,Q,\alpha)$ be a prelog ring.
    \begin{itemize}
        \item[\textup{(1)}] For an $R$-module $M$, the set $F\Der_{(R,Q,\alpha)}(M)$ admits a natural sub-$R$-module structure of the $R$-module $F\Der_R(M)\times \Hom_\mathbf{Mon}(Q,M)$.
        \item[\textup{(2)}] Let $u\colon M\to M'$ be a homomorphism of $R$-modules. Then the homomorphism of $R$-modules
            \begin{equation*}
                F\Der_R(M)\times \Hom_\mathbf{Mon}(Q,M)\to F\Der_R(M')\times \Hom_\mathbf{Mon}(Q,M')
            \end{equation*}
            sending $(D,g)$ to $(uD,ug)$ induces a homomorphism of $R$-modules
            \begin{equation*}
                F\Der_{(R,Q,\alpha)}(M)\to F\Der_{(R,Q,\alpha)}(M')
            \end{equation*}
            sending $(D,\delta)$ to $(uD,u\delta)$.
    \end{itemize}
\end{prop}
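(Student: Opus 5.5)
The plan is to reduce both assertions to the corresponding facts for $F\Der_R(M)$, which is a standard $R$-module (Saito), and for $\Hom_\mathbf{Mon}(Q,M)$, whose $R$-module structure is the pointwise one recalled above. The one point to keep in mind is that the $R$-module structure on $F\Der_R(M)$ is \emph{not} the naive pointwise one. Saito's convention (\cite{MR4412577}) is that $a\in R$ acts on an FW-derivation $D$ by $(aD)(b)=aD(b)$, and this is again an FW-derivation, because both defining identities are linear in $D$ with coefficients in $R$. Addition is pointwise, and each defining identity is additive in $D$. So $F\Der_R(M)\times\Hom_\mathbf{Mon}(Q,M)$ is an $R$-module with componentwise operations.

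For (1), I would show that $F\Der_{(R,Q,\alpha)}(M)$ is closed under these operations.
\begin{itemize}
\item The zero pair $(0,0)$ satisfies $D(\alpha(x))=0=\alpha(x)^p\cdot 0$.
\item For $(D_1,\delta_1),(D_2,\delta_2)\in F\Der_{(R,Q,\alpha)}(M)$, we have
\begin{equation*}
(D_1+D_2)(\alpha(x))=\alpha(x)^p\delta_1(x)+\alpha(x)^p\delta_2(x)=\alpha(x)^p(\delta_1+\delta_2)(x).
\end{equation*}
\item For $a\in R$, we have $(aD)(\alpha(x))=a\alpha(x)^p\delta(x)=\alpha(x)^p(a\delta)(x)$.
\item Negatives are covered by taking $a=-1$.
\end{itemize}
Here we use that $\delta_1+\delta_2$ and $a\delta$ are again monoid homomorphisms $Q\to M$ into the additive group, which holds because $M$ is commutative. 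Hence $F\Der_{(R,Q,\alpha)}(M)$ is a sub-$R$-module.

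For (2), I would first check that the map $(D,g)\mapsto(uD,ug)$ is well defined on the product.
\begin{itemize}
\item $uD$ is an FW-derivation, since applying the $R$-linear map $u$ to the two identities gives the same identities for $uD$; for example, $uD(ab)=b^p uD(a)+a^p uD(b)$.
\item $ug$ is a monoid homomorphism, since $u$ is additive.
\item The map is $R$-linear because $u$ is $R$-linear.
\end{itemize}
It then preserves the log condition, since $uD(\alpha(x))=u(\alpha(x)^p\delta(x))=\alpha(x)^p u\delta(x)$. So it restricts to an $R$-linear map $F\Der_{(R,Q,\alpha)}(M)\to F\Der_{(R,Q,\alpha)}(M')$.

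There is no real obstacle here. The only subtle point is to use the correct $R$-module structure on FW-derivations and to note that it is compatible with the linear constraint $D\circ\alpha=\alpha^p\delta$. Everything else is routine.
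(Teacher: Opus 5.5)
Your proposal is correct and matches the paper's proof. In both, one checks that the zero pair and $R$-linear combinations preserve the condition $D(\alpha(x))=\alpha(x)^p\delta(x)$, and that post-composition with $u$ preserves it. The only slip is your remark that the module structure on $F\Der_R(M)$ is \emph{not} the pointwise one: the structure $(aD)(b)=aD(b)$ that you then use is exactly the pointwise one, and it is the correct one, so the argument is unaffected.
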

\begin{proof}
    (1) It is obvious that the pair $(R\stackrel{0}{\to} M,Q\stackrel{0}{\to} M)$ belongs to $F\Der_{(R,Q,\alpha)}(M)$. Let $(D,\delta)$ and $(D',\delta')$ be elements of $F\Der_{(R,Q,\alpha)}(M)$. For elements $c$ and $c'$ of $R$, put $D''=cD+c'D'$ and $\delta''=c\delta+c'\delta'$. Then $D''$ is an FW-derivation and $\delta''$ is a homomorphism of monoids. Moreover, for every $x\in Q$, we have
    \begin{align*}
        D''\alpha(x) &= cD\alpha(x)+c'D'\alpha(x) \\
        &= c\alpha(x)^p \delta(x)+c'\alpha(x)^p \delta'(x) \\
        &= \alpha(x)^p \delta''(x).
    \end{align*}
    Thus (1) is proved.

    (2) Let $(D,\delta)$ be an element of $F\Der_{(R,Q,\alpha)}(M)$. We show that $(uD,u\delta)$ belongs to $F\Der_{(R,Q,\alpha)}(M')$. For every $x\in Q$, we have
    \begin{align*}
        uD\alpha(x) &= u(D\alpha(x)) \\
        &= u(\alpha(x)^p \delta(x)) \\
        &= \alpha(x)^p u\delta(x).
    \end{align*}
    This implies that $(uD,u\delta)$ is an element of $F\Der_{(R,Q,\alpha)}(M')$.
\end{proof}

By Proposition \ref{prop:logFW-fun}, we have a functor $F\Der_{(R,Q,\alpha)}(-)\colon \mathbf{Mod}_R\to \mathbf{Mod}_R$ for a log ring $(R,Q,\alpha)$.

\begin{thm}\label{thm:logFW-exist}
    Let $(R,Q,\alpha)$ be a prelog ring. Then the functor $F\Der_{(R,Q,\alpha)}(-)$ is representable by a universal log FW-derivation
    \begin{equation*}
        (w\colon R\to F\Omega_{(R,Q,\alpha)}, w\log \colon Q\to F\Omega_{(R,Q,\alpha)}).
    \end{equation*}
\end{thm}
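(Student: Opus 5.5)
We construct $F\Omega_{(R,Q,\alpha)}$ explicitly as a quotient of a module already known to represent the product functor. By \cite[Lemma 2.1]{MR4412577}, $F\Der_R(-)$ is represented by the universal FW-derivation $w\colon R\to F\Omega_R$, so $F\Der_R(M)\cong \Hom_R(F\Omega_R,M)$. By the isomorphisms recalled just before Proposition \ref{prop:logFW-fun}, $\Hom_\mathbf{Mon}(Q,M)\cong \Hom_R(R\otimes_\mathbb{Z} Q^{gp},M)$; the corresponding universal monoid map is $x\mapsto 1\otimes x$. The product functor $F\Der_R(M)\times \Hom_\mathbf{Mon}(Q,M)$ is therefore represented by the direct sum $F\Omega_R\oplus (R\otimes_\mathbb{Z} Q^{gp})$, with universal pair $(w, x\mapsto 1\otimes x)$.

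We then define
\begin{equation*}
F\Omega_{(R,Q,\alpha)} := \bigl(F\Omega_R\oplus (R\otimes_\mathbb{Z} Q^{gp})\bigr)/N,
\end{equation*}
where $N$ is the $R$-submodule generated by the elements $\bigl(w(\alpha(x)),\, -\alpha(x)^p\otimes x\bigr)$ for $x\in Q$. We let $w$ (abusing notation) and $w\log$ be the compositions of $w$ and of $x\mapsto 1\otimes x$ with the projection to this quotient.

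\begin{itemize}
\item[(a)] The pair $(w,w\log)$ is a log FW-derivation. The composite of an FW-derivation with an $R$-linear map is still an FW-derivation, since the defining identities are $R$-linear in the target; similarly $w\log$ is a monoid homomorphism. The relation $w(\alpha(x))=\alpha(x)^p\, w\log(x)$ holds by construction of $N$.
\item[(b)] The pair is universal. Given $(D,\delta)\in F\Der_{(R,Q,\alpha)}(M)$, the universal properties give a unique $R$-linear map $F\Omega_R\oplus(R\otimes Q^{gp})\to M$ sending $(w(a),0)\mapsto D(a)$ and $(0,1\otimes x)\mapsto\delta(x)$. This map kills each generator of $N$ precisely because $D(\alpha(x))=\alpha(x)^p\delta(x)$, so it factors through the quotient. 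Uniqueness follows because the images of $w(R)$ and $w\log(Q)$ generate the quotient as an $R$-module. Indeed, $F\Omega_R$ is generated by $w(R)$, which is part of Saito's construction, or follows from the uniqueness in his universal property.
\end{itemize}

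This yields a bijection $\Hom_R(F\Omega_{(R,Q,\alpha)},M)\cong F\Der_{(R,Q,\alpha)}(M)$, given by $u\mapsto (uw,u\,w\log)$. It is $R$-linear by Proposition \ref{prop:logFW-fun}(1) and natural in $M$ by Proposition \ref{prop:logFW-fun}(2).

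No step is a genuine obstacle. The only point needing care is confirming that the product functor is represented by the direct sum with the stated universal pair. This reduces to Saito's existence result together with the identification $\Hom_\mathbf{Mon}(Q,M)\cong\Hom_R(R\otimes_\mathbb{Z} Q^{gp},M)$.
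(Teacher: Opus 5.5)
Your proposal is correct and is essentially the paper's proof: the paper defines $F\Omega_{(R,Q,\alpha)}$ as the same quotient of $F\Omega_R\oplus(R\otimes_\mathbb{Z}Q^{gp})$ by the submodule generated by the elements $(w_0\alpha(x),-\alpha(x)^p\otimes x)$, uses the same universal pair, and simply asserts that $u\mapsto(uw,uw\log)$ is an isomorphism by construction. Your steps (a) and (b) just spell out the verification that the paper leaves implicit.
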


\begin{proof}
    Let $w_0\colon R\to F\Omega_R$ denote the universal FW-derivation of $R$.
    We define $F\Omega_{(R,Q,\alpha)}$ to be the quotient of the $R$-module
    \begin{equation*}
        F\Omega_R\oplus (R\otimes_\mathbb{Z} Q^{gp})
    \end{equation*}
    by the submodule generated by elements of the form
    \begin{equation*}
        (w_0\alpha(x),-\alpha(x)^p\otimes x),
    \end{equation*}
    for all $x\in Q$. Let $w\colon R\to F\Omega_{(R,Q,\alpha)}$ be the homomorphism sending $x$ to the class of $(w_0(x),0)$ and $w\log\colon Q\to F\Omega_{(R,Q,\alpha)}$ be the homomorphism sending $q$ to the class of $(0,1\otimes q)$.
    Then $(w,w\log)$ is a universal log FW-derivation. Indeed, for an $R$-module $M$, the homomorphism
    \begin{equation*}
        \Hom_R(F\Omega_{(R,Q,\alpha)},M)\to F\Der_{(R,Q,\alpha)}(M)
    \end{equation*}
    sending $u$ to $(uw,uw\log)$ is an isomorphism by the construction of $F\Omega_{(R,Q,\alpha)}$.
\end{proof}

\begin{dfn}
    Let $(R,Q,\alpha)$ be a prelog ring.
    \begin{itemize}
        \item[(1)] We call the $R$-module $F\Omega_{(R,Q,\alpha)}$ in Theorem \ref{thm:logFW-exist} \textit{the module of logarithmic FW-differentials} of $(R,Q,\alpha)$ (or \textit{the module of log FW-differentials} of $(R,Q,\alpha)$ for short).
        \item[(2)] We call the $R/pR$-module $\tilde{F}\Omega_{(R,Q,\alpha)}$ defined by
            \begin{equation*}
                \tilde{F}\Omega_{(R,Q,\alpha)}=F\Omega_{(R,Q,\alpha)}\otimes_R R/pR
            \end{equation*}
            \textit{the module of log FW-differentials modulo $p$}.
    \end{itemize}
\end{dfn}
By definition, the $R/pR$-module $\tilde{F}\Omega_{(R,Q,\alpha)}$ represents the functor
\begin{equation*}
    F\Der_{(R,Q,\alpha)}(-)\colon \mathbf{Mod}_{R/pR}\to \mathbf{Mod}_{R/pR}.
\end{equation*}
Similarly, we set $\tilde{F}\Omega_R=F\Omega_R\otimes_R R/pR$ for a ring $R$. By \cite[Corollary 2.4 (1)]{MR4412577}, we have $\tilde{F}\Omega_R=F\Omega_R$ for any $\mathbb{Z}_{(p)}$-algebra $R$.

\begin{rem}\label{rem:logFW}
    We give some remarks.
    \begin{itemize}
        \item[(1)] Let $R$ be a ring and $(1,\iota\colon 1\to R)$ be the initial prelog structure on $R$. Then we have $F\Der_{(R,1,\iota)}(-)\cong F\Der_R(-)$ and hence $F\Omega_{(R,1,\iota)}\cong F\Omega_R$.
        \item[(2)] Let $(R,Q,\alpha)$ be a prelog ring. By the construction of $F\Omega_{(R,Q,\alpha)}$, we see that the $R$-module $F\Omega_{(R,Q,\alpha)}$ is a quotient of $F\Omega_R\oplus(R\otimes_\mathbb{Z}Q^{gp})$. Similarly, the $R/pR$-module $\tilde{F}\Omega_{(R,Q,\alpha)}$ is a quotient of $\tilde{F}\Omega_R\oplus(R/pR\otimes_\mathbb{Z}Q^{gp})$.
    \end{itemize}
\end{rem}

\begin{prop}\label{prop:logFW-fin}
    Let $(R,Q,\alpha)$ be a prelog ring such that $R/pR$ is $F$-finite and that $Q$ is finitely generated. Then the $R/pR$-module $\tilde{F}\Omega_{(R,Q,\alpha)}$ is finitely generated.
\end{prop}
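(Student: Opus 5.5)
By Remark \ref{rem:logFW} (2), the $R/pR$-module $\tilde{F}\Omega_{(R,Q,\alpha)}$ is a quotient of
\begin{equation*}
    \tilde{F}\Omega_R\oplus (R/pR\otimes_\mathbb{Z} Q^{gp}).
\end{equation*}
It therefore suffices to show that each summand is a finitely generated $R/pR$-module.

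The second summand is easy. Since $Q$ is finitely generated as a monoid, $Q^{gp}$ is a finitely generated abelian group: the images of a finite set of monoid generators generate it as a group. Hence $R/pR\otimes_\mathbb{Z} Q^{gp}$ is generated over $R/pR$ by the elements $1\otimes q_i$ for finitely many generators $q_i$ of $Q$.

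The main point is the finite generation of $\tilde{F}\Omega_R$ over $R/pR$ when $R/pR$ is $F$-finite. I would rely on Saito's structure results for FW-differentials. Recall \cite[Corollary 2.4]{MR4412577}: for a ring $R$ there is an exact sequence relating $\tilde{F}\Omega_R$ to $\Omega_{(R/pR)}$ twisted by Frobenius, namely
\begin{equation*}
    F^*(pR/p^2R)\to \tilde{F}\Omega_R\to F^*\Omega_{R/pR}\to 0,
\end{equation*}
where $F^*$ denotes base change along the Frobenius of $R/pR$. I expect this to be the main obstacle: I need to locate the precise form of this sequence in \cite{MR4412577} and check that it applies to an arbitrary ring, not only to $\mathbb{Z}_{(p)}$-algebras.

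Given the sequence, the argument is short. The left term is generated by the image of the class of $p$, hence by $w(p)$, so it is cyclic. If $R/pR$ is $F$-finite, then $\Omega_{R/pR}=\Omega_{(R/pR)/(R/pR)^p}$ is finitely generated over $R/pR$, since $R/pR$ is a finite algebra over its subring of $p$-th powers. Its Frobenius pullback is then also finitely generated. An extension of finitely generated modules is finitely generated, so $\tilde{F}\Omega_R$ is finitely generated.

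If that sequence is not available in the needed form, I would argue directly instead. Choose finitely many $a_1,\dots,a_n\in R$ whose images generate $R/pR$ as an $(R/pR)^p$-module. For any $a\in R$, write $a\equiv\sum c_i^p a_i \pmod p$. Using the FW-derivation rules modulo $p$, one expands $w(a)$ in terms of $w(a_i)$, $w(c_i)$ and $w(p)$ with coefficients in $R/pR$. The coefficients $a_i^p$ and $c_i^{p^2}$ only involve $p$-th powers, while $w$ is additive up to multiples of $w(p)$, and $w(pb)=b^pw(p)+p^pw(b)\equiv b^pw(p)$. The difficulty in this route is that $w(c_i)$ reintroduces arbitrary elements. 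To handle it, I would iterate the expansion and use the fact that $w(c^p)=pc^{p(p-1)}w(c)$ is $0$ modulo $p$. The key identity is
\begin{equation*}
    w\Bigl(\sum_i c_i^p a_i\Bigr)\equiv\sum_i c_i^{p^2}w(a_i)+(\text{multiple of }w(p)),
\end{equation*}
which shows that $w(a)$ lies in the $R/pR$-span of $w(a_1),\dots,w(a_n)$ and $w(p)$, since $w(c_i^p)\equiv 0$. Hence $\tilde{F}\Omega_R$ is generated by these finitely many elements. This computation is the step I expect to require the most care.
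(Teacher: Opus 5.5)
Your main argument is correct and is essentially the paper's proof. It uses Remark \ref{rem:logFW} (2) to reduce to $\tilde{F}\Omega_R\oplus(R/pR\otimes_\mathbb{Z}Q^{gp})$, notes that $Q^{gp}$ is finitely generated, and gets finite generation of $\tilde{F}\Omega_R$ from \cite[Corollary 2.4 (2)]{MR4412577} via $F$-finiteness of $R/pR$; your sequence does hold for an arbitrary ring, since $\tilde{F}\Omega_R/\langle w(p)\rangle$ represents derivations $R/pR\to F_*M$.

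Your fallback computation is also sound and in effect reproves the needed part of Saito's result. No iteration is needed there, because $w(c_i)$ only enters through $w(c_i^p)=p\,c_i^{p(p-1)}w(c_i)\equiv 0$.
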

\begin{proof}
    As $R/pR$ is $F$-finite, the $R/pR$-module $\Omega_{R/pR}=\Omega_{(R/pR)/\mathbb{F}_p}$ is finitely generated.
    Hence the $R/pR$-module $\tilde{F}\Omega_R$ is finitely generated by \cite[Corollary 2.4 (2)]{MR4412577}. Then the assertion follows from Remark \ref{rem:logFW} (2).
\end{proof}

Let $(f,g)\colon (R,Q,\alpha)\to (R',Q',\alpha')$ be a homomorphism of prelog rings and
\begin{align*}
    & (w_R\colon R\to F\Omega_{(R,Q,\alpha)},w\log_R\colon Q\to F\Omega_{(R,Q,\alpha)}) \\
    & (w_{R'}\colon R'\to F\Omega_{(R',Q',\alpha')},w\log_{R'}\colon Q'\to F\Omega_{(R',Q',\alpha')})
\end{align*}
be the universal log FW-derivations of $(R,Q,\alpha)$ and $(R',Q',\alpha')$, respectively. Then the pair
\begin{equation*}
    (w_{R'}f\colon R\to F\Omega_{(R',Q',\alpha')},w\log_{R'}g\colon Q\to F\Omega_{(R',Q',\alpha')})
\end{equation*}
is a log FW-derivation of $(R,Q,\alpha)$ with values in $F\Omega_{(R',Q',\alpha')}$. By the universality, we have a homomorphism of $R$-modules
\begin{equation*}
    F\Omega_{(R,Q,\alpha)}\to F\Omega_{(R',Q',\alpha')}
\end{equation*}
and a homomorphism of $R'$-modules
\begin{equation*}
    F\Omega_{(R,Q,\alpha)}\otimes_R R'\to F\Omega_{(R',Q',\alpha')}
\end{equation*}
and a homomorphism of $R'/pR'$-modules
\begin{equation*}
    \tilde{F}\Omega_{(R,Q,\alpha)}\otimes_R R'\to \tilde{F}\Omega_{(R',Q',\alpha')}.
\end{equation*}
Note that the homomorphism $F\Omega_{(R,Q,\alpha)}\to F\Omega_{(R',Q',\alpha')}$ sends $w_R(a)$ to $w_Rf(a)$ and sends $w\log_R(x)$ to $w\log_{R'}g(x)$.

\begin{prop}\label{prop:logFW-basic}
    Let $(R,Q,\alpha)$ be a prelog ring. Then the following hold.
    \begin{itemize}
        \item[\textup{(1)}] The natural homomorphism $(R,Q,\alpha)\to (R,Q,\alpha)^{log}$ induces a natural isomorphism
            \begin{align*}
                F\Omega_{(R,Q,\alpha)} &\cong F\Omega_{(R,Q,\alpha)^{log}}.
            \end{align*}
        \item[\textup{(2)}] Let $I$ be an ideal of $R$ and $\pi\colon R\to R/I$ be the canonical surjection. Then the natural homomorphism
            \begin{equation*}
                F\Omega_{(R,Q,\alpha)}\otimes_R R/I\to F\Omega_{(R/I,Q,\pi\alpha)}
            \end{equation*}
            is surjective and its kernel is generated by the image of $w(I)\subset F\Omega_{(R,Q,\alpha)}$ in $F\Omega_{(R,Q,\alpha)}\otimes_R R/I$. In particular, we have a natural isomorphism
            \begin{equation*}
                F\Omega_{(R,Q,\alpha)}\otimes_R R/I_\alpha \cong F\Omega_{(R/I_\alpha,Q,\pi\alpha)}.
            \end{equation*}
        \item[\textup{(3)}] If $R$ is a ring over $\mathbb{F}_p$, then we have a natural isomorphism
            \begin{equation*}
                F\Omega_{(R,Q,\alpha)}\cong F^*\Omega_{(R,Q,\alpha)},
            \end{equation*}
            where $\Omega_{(R,Q,\alpha)}=\Omega_{(R,Q,\alpha)/\mathbb{F}_p}$ is the module of log differentials of the homomorphism of prelog rings $\mathbb{F}_p\to (R,Q,\alpha)$ and $F^*\Omega_{(R,Q,\alpha)}=\Omega_{(R,Q,\alpha)}\otimes_R R$ is the base change of $\Omega_{(R,Q,\alpha)}$ with respect to the Frobenius homomorphism $F\colon R\to R$.
    \end{itemize}
\end{prop}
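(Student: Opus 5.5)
All three parts will be proved through the universal property: in each case I compare the functors $F\Der(-)$ that the two modules represent, and conclude by Yoneda. Where that is awkward, I use the explicit presentation of $F\Omega_{(R,Q,\alpha)}$ from the proof of Theorem \ref{thm:logFW-exist}, namely $F\Omega_R\oplus(R\otimes_\mathbb{Z}Q^{gp})$ modulo the elements $(w_0\alpha(x),-\alpha(x)^p\otimes x)$.

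For (1), I show that restriction along $\iota_1\colon Q\to Q^{log}$ gives a bijection $F\Der_{(R,Q,\alpha)^{log}}(M)\to F\Der_{(R,Q,\alpha)}(M)$ for every $R$-module $M$. Let $(D,\delta)$ be a log FW-derivation of $(R,Q,\alpha)$. Since $Q^{log}$ is a pushout and $M$ is a group under addition, extending $\delta$ to $Q^{log}$ amounts to choosing a homomorphism $\varepsilon\colon R^\times\to M$ with $\varepsilon\alpha=\delta$ on $\alpha^{-1}(R^\times)$. The compatibility condition forces $\varepsilon(u)=u^{-p}D(u)$, so the extension is unique if it exists.
\begin{itemize}
\item The map $\varepsilon$ is a homomorphism, because $D(uv)=v^pD(u)+u^pD(v)$ gives $(uv)^{-p}D(uv)=u^{-p}D(u)+v^{-p}D(v)$.
\item It agrees with $\delta$ on $\alpha^{-1}(R^\times)$, because $D\alpha(x)=\alpha(x)^p\delta(x)$ there.
\item The identity $D\alpha^{log}(y)=\alpha^{log}(y)^p\delta^{log}(y)$ holds on generators $\iota_1(x)$ and $\iota_2(u)$. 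It is multiplicative in $y$ by the Leibniz rule, so it holds on all of $Q^{log}$.
\end{itemize}
This gives the bijection. It is natural in $M$, so Yoneda yields the isomorphism $F\Omega_{(R,Q,\alpha)}\cong F\Omega_{(R,Q,\alpha)^{log}}$.

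For (2), let $N$ be an $R/I$-module. I claim log FW-derivations of $(R/I,Q,\pi\alpha)$ into $N$ correspond exactly to log FW-derivations $(D,\delta)$ of $(R,Q,\alpha)$ into $N$ with $D(I)=0$. One direction is composition with $\pi$. For the converse, take $a\in R$ and $i\in I$. Then $D(a+i)=D(a)+D(i)-P(a,i)D(p)=D(a)$, since $D(i)=0$ and $P(a,i)\in I$ acts as zero on $N$ (every monomial of $P$ has a positive power of $i$). So $D$ factors through $R/I$, and the FW-derivation identities descend. Consequently $F\Omega_{(R/I,Q,\pi\alpha)}$ represents the same functor on $R/I$-modules as $(F\Omega_{(R,Q,\alpha)}\otimes_RR/I)/\langle w(I)\rangle$, which gives surjectivity and the description of the kernel. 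For the special case $I=I_\alpha$, it suffices to check $w(I_\alpha)\subset I_\alpha F\Omega$.
\begin{itemize}
\item For $x\in Q^+$, $w(\alpha(x))=\alpha(x)^p\,w\log(x)$ lies in $I_\alpha F\Omega$.
\item For $r\in R$, $w(r\alpha(x))=\alpha(x)^pw(r)+r^pw(\alpha(x))$ also lies in $I_\alpha F\Omega$.
\item Sums are handled by the additivity formula, whose error term $P(a,b)w(p)$ lies in $I_\alpha F\Omega$ when $a,b\in I_\alpha$.
\end{itemize}

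For (3), note that $p=0$ in $R$, so the additivity formula becomes $D(a+b)=D(a)+D(b)$. Then $D$ is additive and satisfies $D(ab)=b^pD(a)+a^pD(b)$. Regarding $M$ as an $R$-module via Frobenius, $D$ is an ordinary derivation $R\to F_*M$ and $\delta$ is a monoid map $Q\to F_*M$. The condition $D\alpha(x)=\alpha(x)^p\delta(x)$ is exactly the log-derivation condition $D\alpha(x)=\alpha(x)\cdot\delta(x)$ in $F_*M$. Hence $F\Der_{(R,Q,\alpha)}(M)\cong\Der_{(R,Q,\alpha)}(F_*M)\cong\Hom_R(\Omega_{(R,Q,\alpha)},F_*M)\cong\Hom_R(F^*\Omega_{(R,Q,\alpha)},M)$, and Yoneda gives the isomorphism.

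The main obstacle is the bookkeeping in (2): checking that a derivation killing $I$ really descends to $R/I$ (this needs $P(a,i)\in I$), and that $w(I_\alpha)$ lands in $I_\alpha F\Omega$, including the $p$-error term from additivity.
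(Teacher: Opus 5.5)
Your proposal is correct and follows the paper's proof essentially step by step. In each part you compare the represented functors and conclude by Yoneda: restriction along $\iota_1$ with $\delta^{log}\iota_2(u)=u^{-p}D(u)$ forced; derivations killing $I$, descended using $P(a,i)\in I$; and $F\Der_{(R,Q,\alpha)}(M)\cong \Der(F_*M)$ when $p=0$. Your check that $w(I_\alpha)\subset I_\alpha F\Omega_{(R,Q,\alpha)}$ on all of $I_\alpha$, covering products and sums including the $P(a,b)w(p)$ term, is slightly more complete than the paper's, which only verifies the generators $\alpha(x)$.
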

\begin{proof}
    (1) Let $\iota_1$ and $\iota_2$ denote the natural homomorphisms $Q\to Q^{log}$ and $R^\times\to Q^{log}$, respectively.
    For an FW-derivation $D\colon R\to M$, we define the homomorphism of monoids $\varphi_D\colon R^\times \to M$ given by
    \begin{equation*}
        \varphi_D(u)=D(u)/u^p.
    \end{equation*}
    To prove the assertion, it suffices to show that the natural homomorphism
    \begin{equation}\label{eq:4.1}
        F\Der_{(R,Q,\alpha)^{log}}(M)\to F\Der_{(R,Q,\alpha)}(M)
    \end{equation}
    sending $(D,\delta')$ to $(D,\delta'\iota_1)$ is an isomorphism for every $R$-module $M$. First, we show that the map \eqref{eq:4.1} is an injection. Let $(D,\delta')$ be an element of $F\Der_{(R,Q,\alpha)^{log}}(M)$ and suppose $(D,\delta'\iota_1)=(0,0)$. For every $u\in R^\times$, we have
    \begin{equation*}
        \delta'\iota_2(u)=D\alpha^{log}(\iota_2(u))/\alpha^{log}(\iota_2(u))^p=0.
    \end{equation*}
    Since we have $\delta'\iota_1=\delta'\iota_2=0$, we also have $\delta'=0$ by the universality of $Q^{log}$. Therefore the map \eqref{eq:4.1} is an injection.
    
    Next, we show that the map \eqref{eq:4.1} is surjective. Let $(D,\delta)$ be an element of $F\Der_{(R,Q,\alpha)}(M)$. Then we have $\delta(x)=\varphi_D\alpha(x)$ for every $x\in \alpha^{-1}(R^\times)$. Thus, by the universality of $Q^{log}$, we have a homomorphism of monoids $\delta^{log}\colon Q^{log}\to M$ satisfying $\delta=\delta^{log}\iota_1$ and $\varphi_D=\delta^{log}\iota_2$. Then we only need to prove that $(D,\delta^{log})$ is an element of $F\Der_{(R,Q,\alpha)^{log}}(M)$. Let $\xi$ be an element of $Q^{log}$. Write $\xi=\iota_1(x)\cdot \iota_2(u)$ for some $x\in Q$ and $u\in R^\times$. Then we have
    \begin{align*}
        D\alpha^{log}(\xi) &= D(\alpha(x)\cdot u) \\
        &= u^p D\alpha(x)+\alpha(x)^p D(u) \\
        &= u^p \alpha(x)^p (\delta(x)+\varphi_D(u)) \\
        &= \alpha^{log}(\xi)^p \delta^{log}(\xi),
    \end{align*}
    which shows that $(D,\delta^{log})$ belongs to $F\Der_{(R,Q,\alpha)^{log}}(M)$.

    (2) For an $R/I$-module $M$, let $V(M)$ be the submodule of $F\Der_{(R,Q,\alpha)}(M)$ consisting of log FW-derivations $(D,\delta)$ satisfying $D(I)=0$. We show that the natural homomorphism
    \begin{equation}\label{eq:4.2}
        F\Der_{(R/I,Q,\pi\alpha)}(M)\to F\Der_{(R,Q,\alpha)}(M)
    \end{equation}
    sending $(\overline{D},\delta)$ to $(\overline{D}\pi,\delta)$ is injective and its image is equal to $V(M)$.
    If we have a natural isomorphism
    \begin{equation*}
        F\Der_{(R/I,Q,\pi\alpha)}(M)\cong V(M),
    \end{equation*}
    the $R/I$-module $F\Omega_{(R/I,Q,\pi\alpha)}$ is isomorphic to the quotient of $F\Omega_{(R,Q,\alpha)}\otimes_R R/I$ by the submodule generated by the image of $w(I)$.
    
    It is obvious that the image of this map is contained in $V(M)$. Since $\pi$ is surjective, the map \eqref{eq:4.2} is injective. Let $(D,\delta)$ be an element of $V(M)$. Then, for elements $a$ and $b$ of $R$ satisfying $a-b\in I$, we have
    \begin{equation*}
        D(a) = D(a-b)+D(b)-P(a-b,b)D(p) = D(b)
    \end{equation*}
    since we have $D(I)=0$ and $IM=0$.
    Thus $D$ induces an FW-derivation $\overline{D}\colon R/I\to M$ such that $D=\overline{D}\pi$, which shows that $V(M)$ is contained in the image of the map \eqref{eq:4.2}.

    For the latter assertion, assume $I=I_\alpha$. The image of $w(I)$ in $F\Omega_{(R,Q,\alpha)}\otimes_R R/I$ is equal to $0$ since we have $w\alpha(x)=\alpha(x)^pw\log(x)$ for every $x\in Q$. Therefore we have
    \begin{equation*}
        F\Omega_{(R,Q,\alpha)}\otimes_R R/I\cong F\Omega_{(R/I,Q,\pi\alpha)}.
    \end{equation*}

    (3) Let $F_*\colon \mathbf{Mod}_R\to \mathbf{Mod}_R$ denote the right adjoint functor of the base change functor $F^*\colon \mathbf{Mod}_R\to \mathbf{Mod}_R$ with respect to the Frobenius homomorphism $F\colon R\to R$. Since $p=0$ in $R$, giving an element of $F\Omega_{(R,Q,\alpha)}(M)$ is equivalent to giving a pair $(D\colon R\to M,\delta\colon Q\to M)$, where $D$ is a map and $\delta$ is a homomorphism of monoids satisfying
    \begin{align*}
        & D(a+b)=D(a)+D(b), \\
        & D(ab)=b^p D(a)+a^p D(b), \\
        & D\alpha(x)=\alpha(x)^p \delta(x)
    \end{align*}
    for every $a,b\in R$ and every $x\in Q$. Therefore the $R$-module $F\Der_{(R,Q,\alpha)}(M)$ is naturally isomorphic to the $R$-module $\Der_\theta(F_* M)$, where $\Der_\theta(F_* M)$ is the module of log derivations of the homomorphism $\theta \colon \mathbb{F}_p\to (R,Q,\alpha)$ with values in $F_* M$. Since we have
    \begin{equation*}
        \Der_\theta(F_* M)\cong \Hom_R(\Omega_{(R,Q,\alpha)},F_* M)\cong \Hom_R(F^*\Omega_{(R,Q,\alpha)},M),
    \end{equation*}
    the assertion follows.
\end{proof}

\begin{prop}
    Let $(R,Q,\alpha)$ be a prelog ring, $S$ be a submonoid of $Q$, and $T$ be a submonoid of $R$. Suppose that $\alpha(S)$ is contained in $T$.
    Then we have natural isomorphisms
    \begin{equation*}
        T^{-1}\tilde{F}\Omega_{(R,Q,\alpha)}\cong \tilde{F}\Omega_{(T^{-1}R,Q,\iota\alpha)}\cong \tilde{F}\Omega_{(T^{-1}R,S^{-1}Q,\tilde{\alpha})},
    \end{equation*}
    where $\iota\colon R\to T^{-1}R$ is the canonical homomorphism and $\tilde{\alpha}\colon S^{-1}Q\to T^{-1}R$ is the homomorphism induced by $\alpha\colon Q\to R$.
\end{prop}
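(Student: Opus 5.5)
The plan is to prove both isomorphisms by comparing the functors that the three modules represent on $T^{-1}R/pT^{-1}R$-modules. For the first isomorphism, the key input is Saito's localization result for FW-differentials. Since $T^{-1}\tilde F\Omega_R\cong \tilde F\Omega_{T^{-1}R}$ (this is in \cite{MR4412577}; the proof below also recovers it), localizing the presentation of Remark \ref{rem:logFW} (2) gives a presentation of $T^{-1}\tilde F\Omega_{(R,Q,\alpha)}$. Its generators are $T^{-1}\tilde F\Omega_R\oplus (T^{-1}R/p\otimes Q^{gp})$, and its relations are $(w_0\alpha(x),-\alpha(x)^p\otimes x)$. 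This is exactly the presentation of $\tilde F\Omega_{(T^{-1}R,Q,\iota\alpha)}$.

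To keep the argument self-contained, I will instead show directly that, for every $T^{-1}R/p$-module $M$, restriction along $\iota$ gives a bijection
\begin{equation*}
F\Der_{(T^{-1}R,Q,\iota\alpha)}(M)\to F\Der_{(R,Q,\alpha)}(M).
\end{equation*}
Injectivity holds because an FW-derivation $D$ on $T^{-1}R$ satisfies $D(a/t)=D(a)/t^p-(a^p/t^{2p})D(t)$. This follows from $D(a)=D(t\cdot a/t)$, using that $t$ is invertible on $M$. For surjectivity, I would define $D'(a/t)=(t^pD(a)-a^pD(t))/t^{2p}$ and check that it is well defined and FW. Well-definedness is the key point. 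If $s(at'-a't)=0$ with $s\in T$, apply $D$ to this equation; the term $(at'-a't)^pD(s)$ vanishes. Additivity of $D$ modulo $p$ is used here, since $pM=0$ makes $D$ additive up to the term $P(a,b)D(p)$, and $D(p)$ is harmless. The Leibniz and sum rules are then the usual quotient-rule computations. The $P(a,b)D(p)$ terms are handled by noting that $D(p)$ is $p$-torsion-compatible, and in fact $M$ is killed by $p$. The monoid component $\delta$ is unchanged, and the compatibility $D'(\iota\alpha(x))=\alpha(x)^p\delta(x)$ is inherited from $D$.

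For the second isomorphism, I would show that restriction along $Q\to S^{-1}Q$ gives a bijection
\begin{equation*}
F\Der_{(T^{-1}R,S^{-1}Q,\tilde\alpha)}(M)\to F\Der_{(T^{-1}R,Q,\iota\alpha)}(M).
\end{equation*}
A monoid homomorphism $\delta\colon Q\to M$ into the group $(M,+)$ extends uniquely to $S^{-1}Q$, via $\delta(x/s)=\delta(x)-\delta(s)$; this is the universal property of localization, using that $M$ is a group. It remains to check the compatibility at $x/s$. Because $\tilde\alpha(s)$ is a unit, $D$ is multiplicative in the FW sense, so $D(\tilde\alpha(x)/\tilde\alpha(s))$ can be computed by the quotient rule. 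Substituting $D\alpha(x)=\alpha(x)^p\delta(x)$ and $D\alpha(s)=\alpha(s)^p\delta(s)$ then yields $\tilde\alpha(x/s)^p(\delta(x)-\delta(s))$, as required.

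Naturality in $M$ is clear in both cases, so Yoneda's lemma gives the isomorphisms. The main obstacle is the well-definedness and FW-additivity of the extended $D'$ in the first step. If the direct computation becomes messy, I would fall back on citing Saito's localization statement for $F\Omega$ and only verify that the log relations localize compatibly, as in the presentation argument above.
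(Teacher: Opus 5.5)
Your proposal is correct and follows essentially the same route as the paper: extend an FW-derivation $D\colon R\to M$ via $D'(a/t)=(t^pD(a)-a^pD(t))/t^{2p}$ using $pM=0$ for the sum rule, extend monoid homomorphisms uniquely along $Q\to S^{-1}Q$, and conclude by Yoneda. You also supply details the paper leaves implicit (injectivity, well-definedness of $D'$, and the compatibility at $x/s$). One correction to your justification: in the well-definedness step, the correction term $P(at'-a't,a't)D(p)$ vanishes because every monomial of $P(X,Y)$ is divisible by $X$ and $at'-a't$ acts as zero on $M$, not because ``$D(p)$ is harmless''.
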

\begin{proof}
    Let $M$ be a $T^{-1}R/pT^{-1}R$-module and $D\colon R\to M$ be an FW-derivation. Then we have a well-defined map $D'\colon T^{-1}R\to M$ given by
    \begin{equation*}
        D'\left(\frac{a}{t}\right)=\frac{t^p D(a)-a^p D(t)}{t^{2p}}.
    \end{equation*}
    We show that $D'$ satisfies
    \begin{align}
        D'\left(\frac{a_1}{t_1}+\frac{a_2}{t_2}\right) &= D'\left(\frac{a_1}{t_1}\right)+D'\left(\frac{a_2}{t_2}\right)-P\left(\frac{a_1}{t_1},\frac{a_2}{t_2}\right)D'(p) \label{eq:loc-sum}, \\
        D'\left(\frac{a_1}{t_1}\cdot \frac{a_2}{t_2}\right) &= \left(\frac{a_2}{t_2}\right)^pD'\left(\frac{a_1}{t_1}\right)+\left(\frac{a_1}{t_1}\right)^pD'\left(\frac{a_2}{t_2}\right) \label{eq:loc-prod}.
    \end{align}
    The equality \eqref{eq:loc-prod} easily follows. To prove \eqref{eq:loc-sum}, we may assume $t_1=t_2=t$.
    Since $M$ is annihilated by $p$, we have
    \begin{align*}
        D'\left(\frac{a_1+a_2}{t}\right) &= \frac{t^pD(a_1+a_2)-(a_1+a_2)^pD(t)}{t^{2p}} \\
        &= \frac{t^p(D(a_1)+D(a_2)-P(a_1,a_2)D(p))-(a_1^p+a_2^p)D(t)}{t^{2p}} \\
        &= D'\left(\frac{a_1}{t}\right)+D'\left(\frac{a_2}{t}\right)-P\left(\frac{a_1}{t},\frac{a_2}{t}\right)D'(p).
    \end{align*}
    Thus $D'$ is a unique FW-derivation extending $D$ and we have the first isomorphism. We also have the second isomorphism since every homomorphism of monoids $Q\to M$ is uniquely extended to a homomorphism $S^{-1}Q\to M$ for every $T^{-1}R/pT^{-1}R$-module $M$.
\end{proof}

Let $P_n(X_1,\ldots,X_n)\in \mathbb{Z}[X_1,\ldots,X_n]$ be the polynomial given by
\begin{equation*}
    P_n(X_1,\ldots,X_n)=\sum_{\substack{0\leq i_1,\ldots,i_n<p\\i_1+\cdots+i_n=p}} \frac{(p-1)!}{i_1!\cdots i_n!}X_1^{i_1}\cdots X_n^{i_n}.
\end{equation*}
Note that we have $pP_n(X_1,\ldots,X_n)=(X_1+\cdots+X_n)^p-(X_1^p+\cdots+X_n^p)$ and $P_2(X_1,X_2)=P(X_1,X_2)$.
In particular, every FW-derivation $D\colon R\to M$ satisfies
\begin{equation*}
    D(a_1+\cdots+a_n)=D(a_1)+\cdots+D(a_n)-P_n(a_1,\ldots,a_n)D(p).
\end{equation*}

\begin{prop}\label{prop:mon-alg}
    For a ring $R$ and a monoid $Q$, let $S=R[Q]$ be the associated monoid algebra and $\iota\colon Q\to S=R[Q]$ be the natural homomorphism. Assume that $Q$ is integral. Then we have
    \begin{equation*}
        \tilde{F}\Omega_{(S,Q,\iota)}\cong \tilde{F}\Omega_R\oplus (S/pS\otimes_\mathbb{Z} Q^{gp}).
    \end{equation*}
    In particular, for an element $a$ of $R$ and an element $x$ of $Q$, the element $w(a)$ corresponds to the element $(w(a),0)$ and the element $w\log(x)$ corresponds to the element $(0,1\otimes x)$.
\end{prop}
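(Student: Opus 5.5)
We compare the functors the two sides represent on $S/pS$-modules. Write $\tilde{F}\Omega_R\otimes_R S$ for the base change of $\tilde{F}\Omega_R$ to $S$; this is presumably what the first summand in the statement means as an $S/pS$-module. The right hand side then represents, on $S/pS$-modules $M$, the functor
\begin{equation*}
M\mapsto F\Der_R(M)\times \Hom_\mathbf{Mon}(Q,M).
\end{equation*}
Here $M$ is regarded as an $R$-module via $R\to S$. The left hand side represents $M\mapsto F\Der_{(S,Q,\iota)}(M)$. So it suffices to construct, naturally in $M$, a bijection
\begin{equation*}
F\Der_{(S,Q,\iota)}(M)\to F\Der_R(M)\times \Hom_\mathbf{Mon}(Q,M),\qquad (D,\delta)\mapsto (D|_R,\delta).
\end{equation*}
Under this bijection $w(a)$ goes to $(w(a),0)$ and $w\log(x)$ goes to $(0,1\otimes x)$, which gives the last sentence of the statement.

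For injectivity, let $(D,\delta)$ be a log FW-derivation on $S$ with values in $M$, where $pM=0$. Every element of $S$ is a finite sum $\sum_i a_i x_i$ with $a_i\in R$ and distinct $x_i\in Q$. The $n$-term sum rule and the product rule give
\begin{equation*}
D\Bigl(\sum_i a_ix_i\Bigr)=\sum_i\bigl(x_i^pD(a_i)+a_i^px_i^p\delta(x_i)\bigr)-P_n(a_1x_1,\ldots,a_nx_n)D(p).
\end{equation*}
Here $D(p)$ is the image of $p\in R$, so $D(p)=D|_R(p)$. Hence $D$ is determined by $D|_R$ and $\delta$.

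For surjectivity, start from an FW-derivation $D_0\colon R\to M$ and a monoid homomorphism $\delta\colon Q\to M$, and define $D$ on $S$ by the formula above. It is well defined because the representation $\sum a_ix$ over distinct $x\in Q$ is unique in the monoid algebra; terms with $a_i=0$ contribute $0$, since $D_0(0)=0$ and $P_n$ vanishes when one variable is dropped. The compatibility $D(\iota(x))=x^p\delta(x)$ holds by taking $a=1$ and using $D_0(1)=0$. What remains is to check the two FW-derivation identities for $D$.

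The main obstacle is this verification, and I would organise it through Witt vectors rather than by direct computation. An FW-derivation $D\colon A\to M$ with $pM=0$ is the same thing as a ring homomorphism
\begin{equation*}
A\to A\oplus M,\qquad a\mapsto (a,D(a)),
\end{equation*}
into a suitable ring structure on $A\oplus M$ lifting the identity modulo $M$. This is the Witt-vector-of-length-two interpretation in Saito's paper \cite{MR4412577}, where $a\mapsto (a,D(a))$ behaves like a section of $W_2$-type. With that reformulation, the extension amounts to the following: we have a ring homomorphism on $R$ and a multiplicative map on $Q$, namely $x\mapsto (x,x^p\delta(x))$, which is multiplicative because $\delta$ is a monoid homomorphism. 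These two should extend uniquely to a ring homomorphism $R[Q]\to S\oplus M$ by the universal property of the monoid algebra, and the resulting map is exactly the formula above.

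The points to check carefully are that the target ring structure is a genuine commutative ring and that the ring homomorphisms lifting the identity correspond exactly to FW-derivations. I expect the integrality hypothesis on $Q$ to enter only in identifying $\Hom_\mathbf{Mon}(Q,M)$ with $S/pS\otimes_\mathbb{Z}Q^{gp}$ and in the uniqueness of coefficients. If the Witt-vector reformulation proves awkward, I would fall back on checking additivity directly using the polynomial identities among the $P_n$, at least for sums of monomials with disjoint supports.
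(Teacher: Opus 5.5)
Your reduction to the bijection $(D,\delta)\mapsto(D|_R,\delta)$, your injectivity argument and your explicit formula for $D$ are exactly the paper's. Where you differ is in checking that this $D$ is an FW-derivation.

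The paper checks it by direct computation. The sum rule comes from the identity $\tilde P(f+g)=\tilde P(f)+\tilde P(g)+P(f,g)-\sum_x P(a_x\iota(x),b_x\iota(x))$. The product rule is proved first when $g$ is a monomial $g_0$ supported at $x_0$, and then in general via the sum rule. The monomial step is the only place integrality of $Q$ is used: it guarantees that the $xx_0$ stay distinct.

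You instead want the identities from the universal property of $R[Q]$. That works, but you left the auxiliary ring unspecified, and it is the whole point. It cannot be a single ring structure on $S\oplus M$ serving all FW-derivations, because $D(p)$ is free data. Put $c=D_0(p)$ and define on $E_c=S\oplus M$
\begin{equation*}
(a,m)+(b,n)=\bigl(a+b,\ m+n-P(a,b)c\bigr),\qquad (a,m)(b,n)=\bigl(ab,\ b^pm+a^pn\bigr).
\end{equation*}
This is a commutative ring:
\begin{itemize}
\item Associativity of addition is the identity $P(X,Y)+P(X+Y,Z)=P(Y,Z)+P(X,Y+Z)$. It holds in $\mathbb{Z}[X,Y,Z]$ because it holds after multiplying by $p$.
\item Distributivity uses $P(ab,ab')=a^pP(b,b')$ and $(b+b')^pm=(b^p+b'^p)m$. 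The latter holds because $pM=0$.
\end{itemize}
With this ring, $a\mapsto(a,D_0(a))$ is a ring homomorphism $R\to E_c$. Also $x\mapsto(\iota(x),\iota(x)^p\delta(x))$ is a monoid homomorphism $Q\to(E_c,\cdot)$, because $\delta$ is one. Let $\Phi\colon R[Q]\to E_c$ be the induced map. Its composite with the projection $E_c\to S$ is the identity, by uniqueness in the universal property, so $\Phi(f)=(f,D(f))$ with $D|_R=D_0$ and $D\iota(x)=\iota(x)^p\delta(x)$. That $\Phi$ is a ring homomorphism says exactly that $D$ satisfies the product rule and the sum rule with $c=D_0(p)=D(p)$. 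Expanding $\Phi(\sum_x a_x\iota(x))$ with the $n$-fold sum in $E_c$, which is governed by $P_n$, gives back your formula, so well-definedness is automatic.

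As for what each route buys: the paper's computation is elementary and self-contained. Yours, once $E_c$ is written down, avoids all bookkeeping with the $P_n$ and never uses integrality. Contrary to your expectation, integrality is needed neither for $\Hom_{\mathbf{Mon}}(Q,M)\cong\Hom_\mathbb{Z}(Q^{gp},M)$, which holds for any monoid, nor for uniqueness of coefficients in $R[Q]$, which is free over $R$ on $Q$. So your argument proves the proposition for an arbitrary monoid $Q$.
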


\begin{proof}
    Let $M$ be an $S/pS$-module. It suffices to show that the homomorphism
    \begin{equation}\label{eq:4.3}
        F\Der_{(S,Q,\iota)}(M)\to F\Der_R(M)\times \Hom_\textbf{Mon}(Q,M)
    \end{equation}
    sending $(D,\delta)$ to $(D\vert_R,\delta)$ is an isomorphism. Since we have $D\iota(x)=\iota(x)^p\delta(x)$ for every $x\in Q$, the map \eqref{eq:4.3} is injective.

    We show that the map \eqref{eq:4.3} is surjective. Let $D_0\colon R\to M$ be an FW-derivation and $\delta\colon Q\to M$ be a homomorphism of monoids.
    For an element $f=a_1\iota(x_1)+\cdots+a_n\iota(x_n)\in S$ with $x_i\neq x_j$ for $i\neq j$, we set
    \begin{equation*}
        \tilde{P}(f)=P_n(a_1\iota(x_1),\ldots,a_n\iota(x_n)).
    \end{equation*}
    Define a map $D\colon S\to M$ by
    \begin{equation*}
        D\left(\sum_x a_x \iota(x)\right)=\sum_x \iota(x)^p D_0(a_x)+\sum_x a_x^p \iota(x)^p \delta(x)-\tilde{P}\left(\sum_x a_x \iota(x)\right)D_0(p).
    \end{equation*}
    Note that we have $D\vert_R=D_0$ and $D\iota(x)=\iota(x)^p\delta(x)$ for every $x\in Q$.
    Moreover, we have $D\iota(xy)=\iota(y)^p D\iota(x)+\iota(x)^p D\iota(y)$.
    It suffices to show that $D\colon S\to M$ satisfies
    \begin{align}
        D(f+g) &= D(f)+D(g)-P(f,g)D(p) \label{eq:4-sum}, \\
        D(fg) &= g^pD(f)+f^pD(g) \label{eq:4-prod}.
    \end{align}
    Write $f=\sum_x a_x \iota(x)$ and $g=\sum_x b_x \iota(x)$. Then we have
    \begin{equation*}
        \begin{split}
            D(f+g) &= \sum_x \iota(x)^p D(a_x+b_x)+\sum_x (a_x+b_x)^p D\iota(x)-\tilde{P}(f+g)D(p) \\
            &= \sum_x \iota(x)^p (D(a_x)+D(b_x)-P(a_x,b_x)D(p))+\sum_x (a_x^p+b_x^p)D\iota(x) \\
            &\quad -\left(\tilde{P}(f)+\tilde{P}(g)+P(f,g)-\sum_x P(a_x\iota(x),b_x\iota(x))\right)D(p) \\
            &= D(f)+D(g)-P(f,g)D(p).
        \end{split}
    \end{equation*}
    This proves \eqref{eq:4-sum}. Next, consider \eqref{eq:4-prod}. Let $g_0=c\iota(x_0)\in S$ be a monomial. Note that we have $x_0x\neq x_0x'$ for $x\neq x'$, since $Q$ is integral. Then we have
    \begin{align}
        D(fg_0) &= D\left(\sum_x a_xc \iota(x x_0)\right) \label{eq:4-prod-mon} \\
        &= \sum_x \iota(x x_0)^p D(a_xc)+\sum_x (a_xc)^p D\iota(x x_0)-g_0^p\tilde{P}(f)D(p) \notag \\
        &= g_0^p(D(f)+\tilde{P}(f)D(p))+f^pD(g_0)-g_0^p\tilde{P}(f)D(p) \notag \\
        &= g_0^pD(f)+f^pD(g_0). \notag
    \end{align}
    By using \eqref{eq:4-sum} and \eqref{eq:4-prod-mon}, we have
    \begin{align*}
        D(fg) &= D\left(\sum_x fb_x\iota(x)\right) \\
        &= \sum_x D(fb_x\iota(x))-f^p\tilde{P}(g)D(p) \\
        &= \sum_x (b_x\iota(x))^pD(f)+\sum_x f^p D(b_x\iota(x))-f^p\tilde{P}(g)D(p) \\
        &= g^pD(f)+f^p(D(g)+\tilde{P}(g)D(p))-f^p\tilde{P}(g)D(p) \\
        &= g^pD(f)+f^pD(g).
    \end{align*}
    This proves \eqref{eq:4-prod}.
    Therefore $D\colon S\to M$ is an FW-derivation.
\end{proof}

\section{The relation between log FW-differentials and log regularity}

\indent

In this section, we prove Main Theorem (Theorem \ref{thm:logreg-cr}).

\begin{lem}\label{lem:red-to-sharp}
    Let $(R,Q,\alpha)$ be a prelog ring such that $Q$ is integral and that $\overline{Q}$ is fine and saturated. Let $s\colon \overline{Q}\to Q$ be a section of the canonical surjection $\pi\colon Q\to \overline{Q}$. Then the homomorphism $(R,\overline{Q},\alpha s)\to (R,Q,\alpha)$ induces a natural isomorphism
    \begin{equation*}
        F\Omega_{(R,\overline{Q},\alpha s)}\cong F\Omega_{(R,Q,\alpha)}.
    \end{equation*}
\end{lem}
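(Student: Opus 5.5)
As in the proof of Proposition \ref{prop:logFW-basic} (1), it suffices to show that for every $R$-module $M$ the restriction map
\begin{equation*}
    F\Der_{(R,Q,\alpha)}(M)\to F\Der_{(R,\overline{Q},\alpha s)}(M),\quad (D,\delta)\mapsto (D,\delta s)
\end{equation*}
is bijective. By the construction in Theorem \ref{thm:logFW-exist}, this bijectivity is equivalent to the induced map $F\Omega_{(R,\overline{Q},\alpha s)}\to F\Omega_{(R,Q,\alpha)}$ being an isomorphism. I use the unique factorization from the proof of Proposition \ref{prop:log-reg1} (2): every $x\in Q$ can be written uniquely as $x=u_x\cdot s\pi(x)$ with $u_x\in Q^\times$. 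Uniqueness makes the map $Q\to Q^\times$, $x\mapsto u_x$, a monoid homomorphism, so $Q\cong Q^\times\times \overline{Q}$ as monoids via $x\mapsto (u_x,\pi(x))$.

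\textbf{Injectivity.} Suppose $(D,\delta s)=(0,0)$. For $u\in Q^\times$ we have $\alpha(u)\in R^\times$ and $D\alpha(u)=\alpha(u)^p\delta(u)$, so $\delta(u)=\alpha(u)^{-p}D\alpha(u)=0$. Since $\delta$ is a monoid homomorphism, $\delta(x)=\delta(u_x)+\delta s\pi(x)=0$ for every $x\in Q$.

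\textbf{Surjectivity.} Given $(D,\delta_0)\in F\Der_{(R,\overline{Q},\alpha s)}(M)$, define
\begin{equation*}
    \delta(x)=\alpha(u_x)^{-p}D\alpha(u_x)+\delta_0(\pi(x)).
\end{equation*}
The map $u\mapsto \alpha(u)^{-p}D\alpha(u)$ is a homomorphism $Q^\times\to M$: this is the map $\varphi_D\circ\alpha$ from the proof of Proposition \ref{prop:logFW-basic} (1), and it is additive because of the product rule for $D$. Combined with the homomorphism property of $x\mapsto u_x$ and of $\pi$, this shows that $\delta$ is a monoid homomorphism. Moreover $\delta s=\delta_0$, since $u_{s(\xi)}=1$. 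It remains to verify the compatibility condition:
\begin{equation*}
    D\alpha(x)=D(\alpha(u_x)\alpha s\pi(x))=\alpha s\pi(x)^pD\alpha(u_x)+\alpha(u_x)^p\alpha s\pi(x)^p\delta_0\pi(x)=\alpha(x)^p\delta(x).
\end{equation*}

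No step is a real obstacle. The only point needing care is that $x\mapsto u_x$ is a homomorphism. This follows from $s$ being a homomorphism, the identity $xy=u_xu_y\,s\pi(xy)$, and the uniqueness of the factorization, which in turn uses that $Q$ is integral.
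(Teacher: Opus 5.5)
Your proposal is correct and follows the paper's proof essentially verbatim. Both reduce to bijectivity of $(D,\delta)\mapsto(D,\delta s)$, use the unique factorization $x=u_x\cdot s\pi(x)$, and define $\delta(x)=\alpha(u_x)^{-p}D\alpha(u_x)+\delta_0(\pi(x))$ for surjectivity; you additionally verify that $x\mapsto u_x$, and hence $\delta$, is a monoid homomorphism, which the paper leaves implicit.
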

\begin{proof}
    It suffices to show that the homomorphism
    \begin{equation}\label{eq:red-to-sharp}
        F\Der_{(R,Q,\alpha)}(M)\to F\Der_{(R,\overline{Q},\alpha s)}(M)
    \end{equation}
    sending $(D,\delta)$ to $(D,\delta s)$ is an isomorphism for every $R$-module $M$. For every $x\in Q$, there is a unique element $u_x$ of $Q^\times$ such that $x=u_x\cdot s\pi(x)$.

    First, we show that the map \eqref{eq:red-to-sharp} is injective. Let $(D,\delta)$ be an element of $F\Omega_{(R,Q,\alpha)}(M)$ and suppose $(D,\delta s)=(0,0)$. Then, for every $x\in Q$, we have
    \begin{align*}
        \delta(x) &= \delta(u_x)+\delta(s\pi(x)) \\
        &= \delta(u_x) \\
        &= D\alpha(u_x)/\alpha(u_x)^p=0.
    \end{align*}
    Hence we have $(D,\delta)=(0,0)$ and the map \eqref{eq:red-to-sharp} is injective.
    
    Next, we show that the map \eqref{eq:red-to-sharp} is surjective.
    For an element $(D,\delta')$ of $F\Der_{(R,\overline{Q},\alpha s)}$, we define a homomorphism of monoids $\delta\colon Q\to M$ by
    \begin{equation*}
        \delta(x)=D\alpha(u_x)/\alpha(u_x)^p+\delta'(\pi(x)).
    \end{equation*}
    Then we have
    \begin{align*}
        D\alpha(x) &= D(\alpha(u_x)\cdot \alpha s\pi(x)) \\
        &= \alpha s\pi(x)^p D\alpha(u_x)+\alpha(u_x)^p D\alpha s\pi(x) \\
        &= \alpha s\pi(x)^p D\alpha(u_x)+\alpha(u_x)^p \alpha s\pi(x)^p \delta'(\pi(x)) \\
        &= \alpha(x)^p \delta(x),
    \end{align*}
    which implies that $(D,\delta)$ is an element of $F\Der_{(R,Q,\alpha)}(M)$. By the construction of $\delta$, we have $\delta'=\delta s$. Therefore the map \eqref{eq:red-to-sharp} is a surjection.
\end{proof}

\begin{lem}\label{lem:factor}
    Let $(R,Q,\alpha)$ be a prelog ring such that $Q$ is sharp. Then we have
    \begin{equation*}
        F\Omega_{(R,Q,\alpha)}\otimes_R R/I_\alpha \cong F\Omega_{(R/I_\alpha,Q,\pi\alpha)} \cong F\Omega_{R/I_\alpha}\oplus (R/I_\alpha \otimes_\mathbb{Z} Q^{gp}),
    \end{equation*}
    where $\pi$ denotes the canonical surjection $R\to R/I_\alpha$.
\end{lem}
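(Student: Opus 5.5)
The first isomorphism is the special case $I=I_\alpha$ of Proposition \ref{prop:logFW-basic} (2), so the work is the second isomorphism. Set $\bar R=R/I_\alpha$ and $\bar\alpha=\pi\alpha$. Since $Q$ is sharp, $Q^+=Q\setminus\{1\}$ and $\bar\alpha(Q^+)=0$. Hence $\bar\alpha$ sends $1$ to $1$ and every other element to $0$.

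I will compare the functors $F\Der_{(\bar R,Q,\bar\alpha)}(M)$ and $F\Der_{\bar R}(M)\times\Hom_\mathbf{Mon}(Q,M)$ for $\bar R$-modules $M$. The map sends $(D,\delta)$ to $(D,\delta)$, viewed as a sub-$\bar R$-module via Proposition \ref{prop:logFW-fun}. I claim the log condition $D(\bar\alpha(x))=\bar\alpha(x)^p\delta(x)$ holds automatically for every FW-derivation $D$ and every homomorphism of monoids $\delta$. For $x=1$ both sides vanish, since $D(1)=D(1\cdot1)=2D(1)$ gives $D(1)=0$. For $x\in Q^+$ the right side is $0^p\delta(x)=0$, because $p\ge1$. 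The left side is $D(0)$, and $D(0)=D(0\cdot0)=0^pD(0)+0^pD(0)=0$. So the two functors coincide.

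By the universality in Theorem \ref{thm:logFW-exist}, the representing objects agree. The right-hand functor is represented by $F\Omega_{\bar R}\oplus(\bar R\otimes_\mathbb{Z}Q^{gp})$, using the isomorphism $\Hom_\mathbf{Mon}(Q,M)\cong\Hom_{\bar R}(\bar R\otimes_\mathbb{Z}Q^{gp},M)$ noted after Definition \ref{def:logFW-der}. Equivalently, in the explicit construction of Theorem \ref{thm:logFW-exist}, the relations $(w_0\bar\alpha(x),-\bar\alpha(x)^p\otimes x)$ all vanish because $w_0(0)=w_0(1)=0$. This directly gives $F\Omega_{(\bar R,Q,\bar\alpha)}=F\Omega_{\bar R}\oplus(\bar R\otimes Q^{gp})$.

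There is no real obstacle. The only points needing care are the vanishing $D(0)=D(1)=0$ and the fact that sharpness is what forces $\bar\alpha$ to kill all of $Q\setminus\{1\}$. Without sharpness, units $u$ of $Q$ would give nontrivial relations $w_0(\bar\alpha(u))=\bar\alpha(u)^p\,w\log(u)$.
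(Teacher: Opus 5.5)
Your proof is correct and follows the paper's argument: the first isomorphism comes from Proposition \ref{prop:logFW-basic} (2), and the second from the fact that the log condition $D(\pi\alpha(x))=\pi\alpha(x)^p\delta(x)$ holds automatically. It holds at $x=1$ trivially, and for $x\in Q^+$ because sharpness gives $\pi\alpha(x)=0$. Making $D(0)=D(1)=0$ and $\delta(1)=0$ explicit, and noting that the relations in the explicit presentation vanish, are harmless additions to the paper's terser proof.
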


\begin{proof}
    We have the first isomorphism by Proposition \ref{prop:logFW-basic} (2). For the second isomorphism, it suffices to show that
    \begin{equation*}
        F\Der_{(R/I_\alpha,Q,\pi\alpha)}(M)=F\Der_{R/I_\alpha}(M)\times \Hom_\mathbf{Mon}(Q,M)
    \end{equation*}
    for every $R/I_\alpha$-module $M$. Let $D\colon R/I_\alpha\to M$ be an FW-derivation and $\delta\colon Q\to M$ be a homomorphism of monoids. Then the equality
    \begin{equation*}
        D\pi\alpha(x)=\pi\alpha(x)^p\delta(x)
    \end{equation*}
    holds for every $x\in Q$. Indeed, the equality obviously holds for $x=1$. If $x\neq 1$, then we have $x\in Q^+$ and $\pi\alpha(x)=0$ since $Q$ is sharp. Thus the equality also holds for $x\neq 1$.
\end{proof}

\begin{prop}\label{prop:completion}
    Let $(R,Q,\alpha)$ be a prelog ring and $I$ be an ideal of $R$. Let $(-)^\wedge$ denote the $I$-adic completion.
    \begin{itemize}
        \item[\textup{(1)}] Let $\widehat{\alpha}$ be the composition of $Q\stackrel{\alpha}{\to}R\to \widehat{R}$. Then we have
            \begin{equation*}
                F\Omega_{(R,Q,\alpha)}^\wedge\cong F\Omega_{(\widehat{R},Q,\widehat{\alpha})}^\wedge.
            \end{equation*}
        \item[\textup{(2)}] Let $J$ be an ideal of $R$. Then we have
            \begin{equation*}
                F\Omega_{(R,Q,\alpha)}^\wedge \otimes_R R/J\cong F\Omega_{(R,Q,\alpha)}^\wedge \otimes_{\widehat{R}} \widehat{R}/J\widehat{R}\cong (F\Omega_{(R,Q,\alpha)}\otimes_R R/J)^\wedge.
            \end{equation*}
    \end{itemize}
\end{prop}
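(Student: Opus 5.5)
Part (2) is essentially formal, so I would handle it first and then use it for (1). Write $M=F\Omega_{(R,Q,\alpha)}$; its $I$-adic completion is $\widehat{M}=\varprojlim M/I^nM$, an $\widehat{R}$-module. For the first isomorphism in (2), $\widehat{M}\otimes_R R/J\cong \widehat{M}/J\widehat{M}\cong \widehat{M}\otimes_{\widehat{R}}\widehat{R}/J\widehat{R}$, because the $R$-action on $\widehat{M}$ factors through $\widehat{R}$. For the second isomorphism, the target is $\varprojlim_n M/(J+I^n)M$. I would compare this with $\widehat{M}/J\widehat{M}$ through the natural map $\widehat{M}\to \varprojlim M/(J+I^n)M$. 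Surjectivity follows from the Mittag-Leffler property: the transition maps $M/I^{n+1}M\to M/I^nM$ are surjective, and the kernels $(JM+I^nM)/I^nM$ form a surjective inverse system.

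The inclusion of $J\widehat{M}$ in the kernel is clear. The reverse inclusion is where some finiteness enters. When $J$ is finitely generated, say $J=(j_1,\dots,j_m)$, I would lift a compatible system $(j_1 y_{1,n}+\dots+j_m y_{m,n})_n$ coherently step by step, using the surjection $M^m\to JM$ at each level. This shows that the kernel equals $J\widehat{M}$, which is the standard argument for completion and quotients. I would state this step carefully, and I expect it is the only point in (2) needing attention. In the paper's applications $R$ is Noetherian and $J=pR$ or $\mathfrak{m}$, so $J$ is finitely generated.

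For (1), I would construct maps in both directions at the level of completed modules. The composite $(R,Q,\alpha)\to(\widehat{R},Q,\widehat{\alpha})$ gives an $R$-linear map $M\to F\Omega_{(\widehat{R},Q,\widehat{\alpha})}$, hence a map $\widehat{M}\to F\Omega_{(\widehat{R},Q,\widehat{\alpha})}^\wedge$ after completing.

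For the inverse, I would build a log FW-derivation $(\widehat{D},\widehat{\delta})$ of $(\widehat{R},Q,\widehat{\alpha})$ with values in $\widehat{M}$, and then extend it by universality and completion. Take $\widehat{\delta}$ to be $w\log$ followed by $M\to\widehat{M}$. For $\widehat{D}$, I would use the key observation that FW-derivations are continuous for the $I$-adic topology: the product rule gives $w(I^{n})\subset I^{n(p-1)}M\subset I^{n}M$ modulo corrections, and the sum rule gives $w(a+b)-w(a)-w(b)=-P(a,b)w(p)$. Therefore, if $a\equiv b \bmod I^n$, then $w(a)\equiv w(b)\bmod I^nM$, using
\begin{equation*}
    w(b+c)=w(b)+w(c)-P(b,c)w(p),
\end{equation*}
where $c\in I^n$, $w(c)\in I^nM$ (write $c$ as a sum of products $xy$ with $x\in I^{n-1}$, $y\in I$, so $w(xy)=y^pw(x)+x^pw(y)$, and induct), and $P(b,c)$ is divisible by $c$. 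Thus $w$ induces compatible maps $R/I^n\to M/I^nM$ and hence a map $\widehat{D}\colon\widehat{R}\to\widehat{M}$. The FW identities and the relation $\widehat{D}\widehat{\alpha}(x)=\widehat{\alpha}(x)^p\widehat{\delta}(x)$ pass to the limit.

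By universality, $(\widehat{D},\widehat{\delta})$ gives an $\widehat{R}$-linear map $F\Omega_{(\widehat{R},Q,\widehat{\alpha})}\to\widehat{M}$. I would check that this map is $I$-adically continuous, i.e.\ that it sends $I^n F\Omega_{(\widehat{R},Q,\widehat{\alpha})}$ into $I^n\widehat{M}$; this holds by linearity since $I^n\widehat{M}$ lies in the closure of $I^nM$. It therefore extends to the completion. Both composites fix the generators $w(a)$ and $w\log(x)$ for $a\in R$, $x\in Q$. Images of $w(\widehat{R})$ are $I$-adic limits of images of $w(R)$ by the continuity above, and these generate a dense submodule. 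Hence both composites are the identity on the completions. The main obstacle is this continuity and density bookkeeping: completion is not exact in general, and $\widehat{M}$ may fail to be $I$-adically complete when $M$ is not finitely generated. If that causes trouble, I would work with the inverse systems $M/I^nM$ throughout, proving level by level that
\begin{equation*}
    M/I^nM\cong F\Omega_{(\widehat{R},Q,\widehat{\alpha})}/I^n F\Omega_{(\widehat{R},Q,\widehat{\alpha})}.
\end{equation*}
For this I would use Proposition \ref{prop:logFW-basic} (2) applied to both $R\to R/I^n$ and $\widehat{R}\to \widehat{R}/I^n\widehat{R}\cong R/I^n$, together with the congruence $w(a)\equiv w(b)$ established above.
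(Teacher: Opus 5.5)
\textbf{The gap is in (2).} The failing step is your claim that, for finitely generated $J=(j_1,\dots,j_m)$, the kernel of $\widehat{M}\to\varprojlim_n M/(JM+I^nM)$ equals $J\widehat{M}$. In your step-by-step lifting, the correction needed at stage $n$ lies in $JM\cap I^nM$. For your lifts $y_{i,n}$ to converge, you must write it as $\sum_i j_ie_i$ with $e_i\in I^{n-c}M$ for a fixed $c$. That is an Artin--Rees property of $M$, not a finiteness property of $J$, and it fails for modules that are not finitely generated, even over a Noetherian ring with $J$ principal.

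Here is a counterexample. Take $R=\mathbb{F}_p[x,y]$, $I=(x)$, $J=(y)$ and $M=\bigoplus_{k\geq 1}R/(y-x^k)$. Each summand is $\mathbb{F}_p[x]$ with $y$ acting as $x^k$. Hence $\widehat{M}=\{(f_k)\in\prod_k\mathbb{F}_p[[x]] : f_k\to 0\}$ and $(M/yM)^\wedge=\varprojlim_n\bigoplus_k\mathbb{F}_p[x]/(x^{\min(k,n)})$. The element $(x^k)_k=(y\cdot 1)_k$ of $\widehat{M}$ maps to $0$ in $(M/yM)^\wedge$. It is not in $y\widehat{M}$: $y$ is injective on each $\mathbb{F}_p[[x]]$, so its only preimage is $(1)_k\notin\widehat{M}$. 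In fact $\widehat{M}/y\widehat{M}$ is not even $x$-adically separated, so it is not isomorphic to the separated module $(M/yM)^\wedge$ at all.

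Modules of this kind do occur as $F\Omega_{(R,Q,\alpha)}$ when $Q$ is not finitely generated. Take $Q$ free on $q_1,q_2,\dots$ with $\alpha(q_k)=(y-x^k)^p$. Since $w$ kills $p$-th powers, $F\Omega_{(R,Q,\alpha)}\cong R^2\oplus\bigoplus_k R/((y-x^k)^{p^2})$. The element $(y(y-x^k)^{p^2-1})_k=(x^k(y-x^k)^{p^2-1})_k$ then shows the same failure.

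So (2) cannot hold in the stated generality. The paper's own Yoneda argument tacitly assumes that $F\Omega^\wedge\otimes_{\widehat{R}}\widehat{R}/J\widehat{R}$ is $I$-adically complete, which is exactly what breaks here. The repair is a finiteness hypothesis on the module. If $R$ is Noetherian and $M$ is finitely generated, as in Corollary \ref{cor:completion} (2) and the main theorem, then $\widehat{M}\cong M\otimes_R\widehat{R}$ and flatness of $\widehat{R}$ gives the isomorphism. Equivalently, Artin--Rees makes your lifting work. The case $J\supseteq I^k$ also works, via $\widehat{M}/I^k\widehat{M}\cong M/I^kM$. Your first isomorphism and your Mittag-Leffler surjectivity are fine.

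\textbf{On (1).} Your argument is correct in outline and differs from the paper's only in packaging. The paper extends every FW-derivation into an arbitrary $I$-adically complete module to $\widehat{R}$ via Cauchy sequences, then applies Yoneda. You build the extension only for $w$ into $\widehat{M}$ and write down inverse maps. Your level-wise fallback is the cleanest form of either: for finitely generated $I$, FW-derivations into modules killed by $I^n$ factor through $R/I^{n+1}\cong\widehat{R}/I^{n+1}\widehat{R}$, so $M/I^nM\cong N/I^nN$.

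Fix the off-by-one, though: $w(I)\subseteq IM$ is false. For $R=\mathbb{F}_p[T]$ and $I=(T)$, $w(T)$ is a basis of $F\Omega_R=F^*\Omega_R$. So $a\equiv b \bmod I^n\Rightarrow w(a)\equiv w(b)\bmod I^nM$ fails for $n=1$. Your induction does give $w(I^n)\subseteq I^nM$ for $n\geq 2$ if you start at $n=2$, where $w(xy)=y^pw(x)+x^pw(y)\in I^pM$ for $x,y\in I$. That bound, or the paper's $w(I^r)\subseteq I^{r-1}M$, suffices for the limit. Finally, for finitely generated $I$ the module $\widehat{M}$ is always $I$-adically complete, so the worry at the end of your proposal does not arise.
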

\begin{proof}
    (1) Let $M$ be an $I$-adically complete $R$-module. Then we have
    \begin{equation*}
        \Hom_R(F\Omega_{(R,Q,\alpha)}^\wedge,M)\cong \Hom_R(F\Omega_{(R,Q,\alpha)},M)\cong F\Der_{(R,Q,\alpha)}(M).
    \end{equation*}
    It suffices to show that every FW-derivation $D\colon R\to M$ is uniquely extended to an FW-derivation $\widehat{D}\colon \widehat{R}\to M$. For an element $a$ of $\widehat{R}$, take a Cauchy sequence $(a_n)_n$ in $R$ converging to $a$. Then we have
    \begin{equation*}
        D(a_m)-D(a_n)=D(a_m-a_n)-P(a_n,a_m-a_n)D(p).
    \end{equation*}
    If we have $a_m-a_n\in I^r$, then we have $D(a_m)-D(a_n)\in I^{r-1}M$.
    Hence $(D(a_n))_n$ is a Cauchy sequence in $M$. Since $M$ is complete, the sequence $(D(a_n))_n$ converges. Moreover, the element $\lim_n D(a_n)$ is independent of the choice of $(a_n)_n$ converging to $a$. Therefore, we have a homomorphism $\widehat{D}\colon \widehat{R}\to M$ sending $\lim_n a_n$ to $\lim_n D(a_n)$. It is easily checked that $\widehat{D}\colon \widehat{R}\to M$ is an FW-derivation. The uniqueness of $\widehat{D}$ directly follows from the formula $\widehat{D}(\lim_n a_n)=\lim_n D(a_n)$.

    (2) We obviously have the first isomorphism. For every $I$-adically complete $R/J$-module $M$, we have
    \begin{align*}
        & \Hom_R(F\Omega_{(R,Q,\alpha)}^\wedge\otimes_{\widehat{R}}\widehat{R}/J\widehat{R},M)\cong \Hom_R(F\Omega_{(R,Q,\alpha)},M), \\
        & \Hom_R((F\Omega_{(R,Q,\alpha)}\otimes_R R/J)^\wedge,M)\cong \Hom_R(F\Omega_{(R,Q,\alpha)},M).
    \end{align*}
    Then we have the second isomorphism.
\end{proof}

\begin{cor}\label{cor:completion}
    Let $(R,Q,\alpha)$ be a prelog ring and $I$ be an ideal of $R$. Assume that $R$ is Noetherian. Let $(-)^\wedge$ denote the $I$-adic completion.
    \begin{itemize}
        \item[\textup{(1)}] We have
            \begin{equation*}
                \tilde{F}\Omega_{(R,Q,\alpha)}^\wedge\cong \tilde{F}\Omega_{(\widehat{R},Q,\widehat{\alpha})}^\wedge,
            \end{equation*}
            where $\widehat{\alpha}$ is the composition of $Q\stackrel{\alpha}{\to}R\to \widehat{R}$.
        \item[\textup{(2)}] If $R/pR$ is F-finite and $Q$ is finitely generated, then we have
            \begin{equation*}
                \tilde{F}\Omega_{(R,Q,\alpha)}\otimes_R \widehat{R}/p\widehat{R}\cong \tilde{F}\Omega_{(\widehat{R},Q,\widehat{\alpha})}.
            \end{equation*}
    \end{itemize}
\end{cor}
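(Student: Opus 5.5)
For (1), I would apply Proposition \ref{prop:completion} (1) to the $R/pR$-module version. Tensoring the isomorphism there with $R/pR$ and using Proposition \ref{prop:completion} (2) with $J=pR$ gives
\begin{equation*}
    \tilde{F}\Omega_{(R,Q,\alpha)}^\wedge\cong (F\Omega_{(R,Q,\alpha)}\otimes_R R/pR)^\wedge\cong F\Omega_{(R,Q,\alpha)}^\wedge\otimes_R R/pR\cong F\Omega_{(\widehat{R},Q,\widehat{\alpha})}^\wedge\otimes_R R/pR .
\end{equation*}
Applying Proposition \ref{prop:completion} (2) again, now to the prelog ring $(\widehat{R},Q,\widehat{\alpha})$ with the ideal $I\widehat{R}$ and $J=p\widehat{R}$, the last term becomes $\tilde{F}\Omega_{(\widehat{R},Q,\widehat{\alpha})}^\wedge$. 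Two identifications need checking here. The $I\widehat{R}$-adic completion of an $\widehat{R}$-module is the same as its $I$-adic completion as an $R$-module. Also, $\widehat{\widehat{R}}=\widehat{R}$, because $R$ is Noetherian and so $\widehat{R}$ is $I\widehat{R}$-adically complete.

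For (2), assume $R/pR$ is $F$-finite and $Q$ is finitely generated. By Proposition \ref{prop:logFW-fin}, $\tilde{F}\Omega_{(R,Q,\alpha)}$ is a finitely generated $R/pR$-module. Since $R$ is Noetherian, its $I$-adic completion is therefore $\tilde{F}\Omega_{(R,Q,\alpha)}\otimes_R\widehat{R}=\tilde{F}\Omega_{(R,Q,\alpha)}\otimes_R \widehat{R}/p\widehat{R}$.

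Next I would show that $\tilde{F}\Omega_{(\widehat{R},Q,\widehat{\alpha})}$ is also finitely generated over $\widehat{R}/p\widehat{R}$, so that it equals its own completion. Here $\widehat{R}/p\widehat{R}$ is the $I$-adic completion of the Noetherian $F$-finite ring $R/pR$. Completion of a Noetherian $F$-finite ring is $F$-finite: the Frobenius pushforward $F_*(R/pR)$ is finite, so $F_*(R/pR)\otimes\widehat{R/pR}\cong F_*(\widehat{R/pR})$. To verify this isomorphism I would use that $(I^n)^{[p]}$ and $I^{pn}$ define the same topology, by the finite generation of $I$. Proposition \ref{prop:logFW-fin} then applies to $(\widehat{R},Q,\widehat{\alpha})$, and since $\widehat{R}$ is Noetherian and $I\widehat{R}$-adically complete, the finitely generated module $\tilde{F}\Omega_{(\widehat{R},Q,\widehat{\alpha})}$ is complete. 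Combining this with (1) yields the isomorphism in (2).

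I expect the main obstacle to be the bookkeeping between completions over $R$ and over $\widehat{R}$ in (1), together with the $F$-finiteness of $\widehat{R}/p\widehat{R}$ in (2). The compatibility of the isomorphisms with the natural map $\tilde{F}\Omega_{(R,Q,\alpha)}\otimes_R\widehat{R}\to\tilde{F}\Omega_{(\widehat{R},Q,\widehat{\alpha})}$ follows from the universality used in Proposition \ref{prop:completion}.
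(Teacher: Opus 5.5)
Your proposal is correct and follows the paper's proof: (1) comes from Proposition~\ref{prop:completion}, and (2) follows from (1) once $\tilde{F}\Omega_{(R,Q,\alpha)}$ and $\tilde{F}\Omega_{(\widehat{R},Q,\widehat{\alpha})}$ are known to be finitely generated, since completion is then base change on the left and does nothing on the right. The only difference is how you get the $F$-finiteness of $\widehat{R}/p\widehat{R}$: the paper uses the surjection $(R/pR)[[T_1,\ldots,T_d]]\to\widehat{R}/p\widehat{R}$ given by generators of $I$, while you use the isomorphism $F_*(R/pR)\otimes_{R/pR}\widehat{R/pR}\cong F_*(\widehat{R/pR})$, and both arguments work.
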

\begin{proof}
    (1) The assertion follows from Lemma \ref{prop:completion}.

    (2) Let $\{x_1,\ldots,x_d\}$ be a generating set of $I$. Then we have a surjection
    \begin{equation*}
        (R/pR)[[T_1,\ldots,T_d]]\to \widehat{R}/p\widehat{R}        
    \end{equation*}
    and hence $\widehat{R}/p\widehat{R}$ is $F$-finite.
    Then the $R/pR$-module $\tilde{F}\Omega_{(R,Q,\alpha)}$ and the $\widehat{R}/p\widehat{R}$-module $\tilde{F}\Omega_{(\widehat{R},Q,\widehat{\alpha})}$ are finitely generated by Corollary \ref{prop:logFW-fin}.
    Hence the assertion follows from (1).
\end{proof}

Finally, we prove Main Theorem (Theorem \ref{thm:logreg-cr}).
For a local ring $(R,\mathfrak{m})$, let $\mathrm{edim}(R)$ denote the dimension of the $R/\mathfrak{m}$-vector space $\mathfrak{m}/\mathfrak{m}^2$.
\begin{thm}\label{thm:logreg-cr}
    Let $(R,Q,\alpha)$ be a local prelog ring such that $R$ is a Noetherian local ring with the residue field $k$ of characteristic $p>0$, that $Q$ is integral, and that $\overline{Q}$ is fine and saturated. Assume that $k$ is $F$-finite and put $[k:k^p]=p^r$. Consider the following conditions:
    \begin{itemize}
        \item[\textup{(1)}] $\tilde{F}\Omega_{(R,Q,\alpha)}$ is a free $R/pR$-module of rank $\dim(R)+r$.
        \item[\textup{(2)}] $\tilde{F}\Omega_{(R,Q,\alpha)}\otimes_R k$ is a $k$-vector space of dimension $\dim(R)+r$.
        \item[\textup{(3)}] $(R,Q,\alpha)$ is log regular.
    \end{itemize}
    Then we always have $(1)\Rightarrow (2)\Leftrightarrow (3)$. Moreover, if $R/pR$ is $F$-finite, then the three conditions $(1)$, $(2)$, and $(3)$ are equivalent.
\end{thm}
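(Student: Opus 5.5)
First I reduce to the case where $Q$ is sharp. Choose a section $s\colon \overline{Q}\to Q$ by Proposition \ref{prop:mon2}. Lemma \ref{lem:red-to-sharp} gives $F\Omega_{(R,\overline{Q},\alpha s)}\cong F\Omega_{(R,Q,\alpha)}$. Proposition \ref{prop:log-reg1} (2) shows that $(R,\overline{Q},\alpha s)$ is again local, and that it is log regular if and only if $(R,Q,\alpha)$ is. So I may assume $Q$ is fine, saturated and sharp. Then $Q^{gp}$ is free of rank $d:=\dim(Q)$ by Proposition \ref{prop:mon1}.

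The implication $(1)\Rightarrow(2)$ is immediate by tensoring with $k$.

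For $(2)\Leftrightarrow(3)$, I compute the fiber using Lemma \ref{lem:factor}. Put $\overline{R}=R/I_\alpha$. Tensoring Lemma \ref{lem:factor} with $k$ gives
\begin{equation*}
\tilde{F}\Omega_{(R,Q,\alpha)}\otimes_R k\cong (F\Omega_{\overline{R}}\otimes_{\overline{R}} k)\oplus k^{d}.
\end{equation*}
I then apply \cite[Proposition 2.6]{MR4412577} to the Noetherian local ring $\overline{R}$, whose residue field is $k$. I expect that result to give
\begin{equation*}
\dim_k(F\Omega_{\overline{R}}\otimes k)\ge \mathrm{edim}(\overline{R})+r,
\end{equation*}
with equality characterizing regularity. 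Any version stating $\dim_k(F\Omega_{\overline{R}}\otimes k)=\mathrm{edim}(\overline{R})+r$ when $p\notin\mathfrak{m}^2$, or in the precise form Saito uses to prove $(2)\Rightarrow(3)$ of Theorem \ref{thm:Saito_reg}, would also suffice. Combining these,
\begin{equation*}
\dim_k \tilde{F}\Omega_{(R,Q,\alpha)}\otimes k \ \ge\ \mathrm{edim}(\overline{R})+r+d \ \ge\ \dim(\overline{R})+d+r \ \ge\ \dim(R)+r,
\end{equation*}
where the last inequality is Proposition \ref{prop:log-reg1} (1). Condition (2) forces equality throughout. Equality in the middle inequality makes $\overline{R}$ regular, and equality in the last gives $\dim R=\dim\overline{R}+\dim Q$, so (3) holds. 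Conversely, if (3) holds, then $\overline{R}$ is regular, so $F\Omega_{\overline{R}}\otimes k$ has dimension $\dim\overline{R}+r$ by Theorem \ref{thm:Saito_reg}, whose implication $(3)\Rightarrow(2)$ for the fiber I expect to follow from Proposition 2.6 as well. Adding $d$ and using $\dim R=\dim\overline{R}+d$ gives (2).

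The main obstacle is pinning down exactly what \cite[Proposition 2.6]{MR4412577} gives in mixed characteristic. There the exact sequence involves $\mathfrak{m}/\mathfrak{m}^2$ and the contribution of $w(p)$, so I must check that the count is $\mathrm{edim}+r$ rather than something depending on whether $p\in\mathfrak{m}^2$. If it only gives the fiber dimension as $\mathrm{edim}(\overline{R})+r$ up to a correction, I will handle the case $p\in \mathfrak{m}_{\overline{R}}^2$ separately. In that case $\overline{R}$ is not regular of mixed characteristic type, and I expect the correction to push the dimension strictly above $\dim(R)+r$.

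For $(3)\Rightarrow(1)$ under the assumption that $R/pR$ is $F$-finite, I pass to the $\mathfrak{m}$-adic completion. By Corollary \ref{cor:completion} (2), $\tilde{F}\Omega_{(R,Q,\alpha)}$ is finitely generated, and it becomes $\tilde{F}\Omega_{(\widehat{R},Q,\widehat{\alpha})}$ after base change to $\widehat{R}/p\widehat{R}$. Since $R/pR\to\widehat{R}/p\widehat{R}$ is faithfully flat, it suffices to prove freeness over $\widehat{R}$.

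Theorem \ref{thm:logreg-str} describes $\widehat{R}$ as $k[[Q\oplus\mathbb{N}^r]]$ or as $V[[Q\oplus\mathbb{N}^r]]/(f)$. For $A=k$ or $A=V$, Proposition \ref{prop:mon-alg} computes $\tilde{F}\Omega$ of $(A[Q\oplus\mathbb{N}^r],Q)$ as $\tilde{F}\Omega_A\oplus(\text{free of rank } d+r)$. By Corollary \ref{cor:completion} (1), the same holds after completion. Here $\tilde{F}\Omega_V$ is free of rank $r+1$, generated by $w(p)$ and the classes from a $p$-basis (Saito); for $k$ it has rank $r$.

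In the equal characteristic case this gives rank $r+d+r_0$, where $r_0=\dim\overline{R}$ is the number of $\mathbb{N}$-variables (called $r$ in Theorem \ref{thm:logreg-str}). This equals $\dim R+r$.

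In the mixed characteristic case I use Proposition \ref{prop:logFW-basic} (2) to quotient by $w(f)$. Since $f$ has constant term $p$, the image of $w(f)$ modulo $p$ and the maximal ideal is $w(p)$ plus terms in the maximal ideal, and $w(p)$ is a basis element. So the quotient is free of rank one less, namely $r+1+d+r_0-1=\dim R+r$. Since $\dim\widehat{R}=d+r_0$, this is the required rank.
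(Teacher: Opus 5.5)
Your proposal is correct and follows the paper's proof essentially step for step. The shared steps are the reduction to sharp $Q$ via Lemma~\ref{lem:red-to-sharp}, the chain $\dim R\le\dim(R/I_\alpha)+\dim Q\le\mathrm{edim}(R/I_\alpha)+\dim Q=\dim_k(\tilde{F}\Omega_{(R,Q,\alpha)}\otimes_R k)-r$ obtained from Lemma~\ref{lem:factor}, and the passage to $\widehat{R}$ followed by killing $w(f)$, whose $w(p)$-coordinate is a unit. Three small corrections: the paper uses \cite[Proposition 2.6]{MR4412577} in the unconditional form $\dim_k(F\Omega_{R/I_\alpha}\otimes k)=\mathrm{edim}(R/I_\alpha)+r$, so no separate treatment of $p\in\mathfrak{m}^2$ is needed; the residue of $w(f)$ is $w(p)+\sum_j h_j^p w(T_j)$ rather than $w(p)$ plus terms in $\mathfrak{m}$, which is harmless since the $w(T_j)$ are other basis vectors; and getting $\tilde{F}\Omega$ of the completed ring itself, not just its completion, needs Corollary~\ref{cor:completion}~(2) rather than (1).
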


\begin{proof}
    Let $s\colon \overline{Q}\to Q$ be a section of the natural surjection $Q\to \overline{Q}$, which exists by Proposition \ref{prop:mon2}. By Proposition \ref{prop:log-reg1} (2) and Lemma \ref{lem:red-to-sharp}, we can replace $(R,Q,\alpha)$ by $(R,\overline{Q},\alpha s)$. Therefore we may assume that $Q$ is fine, saturated, and sharp.
    
    Note that the implication $(1)\Rightarrow (2)$ is obvious.
    We show that (2) is equivalent to (3). By Lemma \ref{lem:factor}, we have
    \begin{equation*}
        F\Omega_{(R,Q,\alpha)}\otimes_R k\cong (F\Omega_{R/I_\alpha}\otimes_R k)\oplus (k\otimes_\mathbb{Z} Q^{gp}).
    \end{equation*}
    Note that the dimension of $F\Omega_{R/I_\alpha}\otimes_R k$ is equal to $\mathrm{edim}(R/I_\alpha)+r$ by \cite[Proposition 2.6]{MR4412577}. By Proposition \ref{prop:mon1}, we have $\dim_k(k\otimes_\mathbb{Z} Q^{gp})=\mathrm{rank}(Q^{gp})=\dim(Q)$ since $Q$ is fine, saturated, and sharp. Thus we have the inequality
    \begin{align*}
        \dim(R/I_\alpha) +\dim(Q)&\leq \mathrm{edim}(R/I_\alpha)+\dim(Q) \\
        &= \dim_k(F\Omega_{(R,Q,\alpha)}\otimes_R k)-r.
    \end{align*}
    On the other hand, we have the inequality $\dim(R)\leq \dim(R/I_\alpha)+\dim(Q)$ by Proposition \ref{prop:log-reg1} (1).
    Therefore, we have $(2)\Leftrightarrow (3)$.

    Finally, we show that (2) and (3) imply (1) under the assumption that $R/pR$ is $F$-finite.
    We only need to show that the $R/pR$-module $\tilde{F}\Omega_{(R,Q,\alpha)}$ is free.
    Since $R/pR\to \widehat{R}/p\widehat{R}$ is faithfully flat and the $R/pR$-module $\tilde{F}\Omega_{(R,Q,\alpha)}$ is finitely generated by Proposition \ref{prop:logFW-fin}, the $R/pR$-module $\tilde{F}\Omega_{(R,Q,\alpha)}$ is free if and only if the $\widehat{R}/p\widehat{R}$-module $\tilde{F}\Omega_{(R,Q,\alpha)}$ is free.
    Thus we may assume $R=\widehat{R}$ by Corollary \ref{cor:completion} (2). Put $n=\dim(R/I_\alpha)$.

    \vspace{5pt}
    \textbf{(\Rnum{1}) The positive characteristic case}: Suppose that $R$ is of characteristic $p$. Put $S_0=k[T_1,\ldots,T_n]$ and $S=S_0[Q]$. Let $\iota\colon Q\to S$ be the natural homomorphism. By Theorem \ref{thm:logreg-str}, we have $(R,Q,\alpha)\cong (\widehat{S},Q,\widehat{\iota})$, where $\widehat{S}$ denotes the completion of $S$ with respest to the maximal ideal $\mathfrak{m}_S$ generated by $T_1,\ldots,T_n$ and $\iota(Q^+)$. Note that $S$ is $F$-finite.
    It suffices to show that the $S$-module $F\Omega_{(S,Q,\iota)}$ is free since we have $F\Omega_{(\widehat{S},Q,\widehat{\iota})}\cong F\Omega_{(S,Q,\iota)}^\wedge$ by Corollary \ref{cor:completion} (2).
    By Proposition \ref{prop:mon-alg}, we have
    \begin{equation*}
        F\Omega_{(S,Q,\iota)}\cong (F\Omega_{S_0}\otimes_{S_0}S)\oplus (S\otimes_\mathbb{Z} Q^{gp}).
    \end{equation*}
    The $S_0$-module $F\Omega_{S_0}$ is free since we have $F\Omega_{S_0}=F^*\Omega_{S_0}$. Therefore, the $S$-module $F\Omega_{(S,Q,\iota)}$ is free.

    \vspace{5pt}
    \textbf{(\Rnum{2}) The mixed characteristic case}: Suppose that $R$ is of mixed characteristic. Let $V$ be a Cohen ring of $k$. Put $S_0=V[T_1,\ldots,T_n]$ and $S=S_0[Q]$. Let $\iota\colon Q\to S$ be the natural homomorphism. Let $\widehat{S}$ denote the completion of $S$ with respect to the maximal ideal $\mathfrak{m}_S$ generated by $p,T_1,\ldots,T_n$ and $\iota(Q^+)$. By Theorem \ref{thm:logreg-str}, there is an element $f$ of $\widehat{S}$ satisfying
    \begin{itemize}
        \item The constant term of $f$ is $p$,
        \item There is an isomorphism $(R,Q,\alpha)\cong (\widehat{S}/f\widehat{S},Q,\pi\widehat{\iota})$, where $\pi$ is the canonical surjection $\widehat{S}\to \widehat{S}/f\widehat{S}$.
    \end{itemize}
    Note that $S/pS\cong k[T_1,\ldots,T_n][Q]$ is $F$-finite.
    By Proposition \ref{prop:logFW-basic} (2), we have
    \begin{equation*}
        \tilde{F}\Omega_{(\widehat{S}/f\widehat{S},Q,\pi\widehat{\iota})}\cong \frac{\tilde{F}\Omega_{(\widehat{S},Q,\widehat{\iota})}\otimes_{\widehat{S}}\widehat{S}/(p,f)}{\langle w(f)\rangle}.
    \end{equation*}
    By Corollary \ref{cor:completion} (2), we have
    \begin{equation*}
        \tilde{F}\Omega_{(\widehat{S},Q,\widehat{\iota})}\cong \tilde{F}\Omega_{(S,Q,\iota)}\otimes_S \widehat{S}/p\widehat{S}.
    \end{equation*}
    By Proposition \ref{prop:mon-alg} and \cite[Proposition 2.5 (3)]{MR4412577}, we have
    \begin{align*}
        \tilde{F}\Omega_{(S,Q,\iota)} &\cong (F\Omega_{S_0}\otimes_{S_0} S/pS)\oplus (S/pS\otimes_\mathbb{Z} Q^{gp}) \\
        &\cong (F\Omega_V\otimes_V S/pS)\oplus (S/pS)^n\oplus (S/pS\otimes_\mathbb{Z} Q^{gp}).
    \end{align*}
    By \cite[Proposition 2.6]{MR4412577}, the element $w(p)$ of the $k$-vector space $F\Omega_V$ belongs to a linear basis of $F\Omega_V$.
    Hence the $S/pS$-module $\tilde{F}\Omega_{(S,Q,\iota)}$ is free and there are elements $v_1,\ldots,v_l$ of $F\Omega_{(S,Q,\iota)}$ such that
    \begin{equation*}
        \{w(p),w(T_1),\ldots,w(T_n),v_1,\ldots,v_l\}\subset \tilde{F}\Omega_{(S,Q,\iota)}
    \end{equation*}
    is a basis of $\tilde{F}\Omega_{(S,Q,\iota)}$.
    In particular, the $\widehat{S}/(p,f)$-module $\tilde{F}\Omega_{(\widehat{S},Q,\widehat{\iota})}\otimes_{\widehat{S}}\widehat{S}/(p,f)$ is free and
    \begin{equation*}
        \{w(p),w(T_1),\ldots,w(T_n),v_1,\ldots,v_l\}\subset (\tilde{F}\Omega_{(\hat{S},Q,\hat{\iota})}\otimes_{\hat{S}}\hat{S}/(p,f))^\wedge
    \end{equation*}
    is a basis of $\tilde{F}\Omega_{(\widehat{S},Q,\widehat{\iota})}\otimes_{\widehat{S}}\widehat{S}/(p,f)$.
    
    We compute $w(f)$. Let $\{x_1,\ldots,x_m\}\subset Q^+$ be a generating set of $Q$. Since the constant term of $f$ is $p$, we can write
    \begin{equation*}
            f-p=g_1 \widehat{\iota}(x_1)+\cdots+g_m \widehat{\iota}(x_m)+h_1 T_1+\cdots+h_n T_n,
    \end{equation*}
    where $g_i,h_i\in \hat{S}$. Then we have
    \begin{equation*}
        \begin{split}
            w(f) &\equiv w(p)+w(f-p)\mod \widehat{\mathfrak{m}}_S \\
            &\equiv w(p)+\sum_i w(g_i \widehat{\iota}(x_i))+\sum_j w(h_j T_j)\mod \widehat{\mathfrak{m}}_S \\
            &\equiv w(p)+\sum_i \widehat{\iota}(x_i)^p w(g_i)+\sum_i g_i^p\widehat{\iota}(x_i)^p w\log(x_i) \\
            &\quad +\sum_j T_j^p w(h_j)+\sum_j h_j^p w(T_j)\mod \widehat{\mathfrak{m}}_S \\
            &\equiv w(p)+\sum_j h_j^p w(T_j)\mod \widehat{\mathfrak{m}}_S.
        \end{split}
    \end{equation*}
    Therefore, we can write
    \begin{equation*}
        w(f)=\varepsilon w(p)+\sum_i a_i w(T_i)+\sum_j b_j v_j,
    \end{equation*}
    where $\varepsilon\in\widehat{S}^\times$ and $a_i,b_j\in \widehat{S}$.
    Hence the $\widehat{S}/(p,f)$-module
    \begin{equation*}
        \tilde{F}\Omega_{(\widehat{S}/f\widehat{S},Q,\pi\widehat{\iota})}\cong \frac{\tilde{F}\Omega_{(\widehat{S},Q,\widehat{\iota})}\otimes_{\widehat{S}}\widehat{S}/(p,f)}{\langle w(f)\rangle}
    \end{equation*}
    is free.
\end{proof}

\begin{rem}
    By taking $Q=1$ in Theorem \ref{thm:logreg-cr}, we obtain a criterion for usual regularity which is valid under a weaker assumption than that of Theorem \ref{thm:Saito_reg} (\cite[Theorem 3.4]{MR4412577}) and that of Theorem \ref{thm:HJ_reg} (\cite[Theorem 4.13]{MR4855866}).
    
    Indeed, if $R/\sqrt{pR}$ is essentially of finite type over an $F$-finite field $k_1$, then $R/\sqrt{pR}$ and hence $R/pR$ are $F$-finite.
    
    If $R$ is essentially of finite type over $V[[T_1,\ldots,T_n]]$, where $V$ is a discrete valuation ring with uniformizer $p$ such that $V/pV$ is $F$-finite, then $R/pR$ is essentially of finite type over $(V/pV)[[T_1,\ldots,T_n]]$ and hence $F$-finite.
\end{rem}


\end{document}